\theoremstyle{plain}
\newtheorem{theorem}{Theorem}
\newtheorem{maintheorem}{Theorem}
\newtheorem{lemma}{Lemma}
\newtheorem{proposition}{Proposition}
\newtheorem{remark}{Remark}
\newenvironment{definition}{{\bf Definition.}}{}
\def\sm{\setminus}
\def\R{\mathbb{R}}
\def\crit{\mbox{\rm Crit}}
\def\F{\mathcal{F}}
\def\eps{\varepsilon}
\def\phi{\varphi}
\def\le{\leqslant}
\def\ge{\geqslant}
\def\M{\mathcal{M}}
\def\dist{\text{dist}}
\def \cC {{\mathcal C}}
\def \cQ {{\mathcal Q}}
\newcommand{\sgn}{\operatorname{sgn}}
\def \RR {{\mathbb R}}
\def \cB {{\mathcal B}}
\def \cC {{\mathcal C}}
\def \cF {{\mathcal F}}
\def \cQ {{\mathcal Q}}
\def\flowclass{{\hat{\mathcal{F}}}}
\def\intclass{{\mathcal{F}}}
\def\Lyap{{\mathfrak{L}}}
\begin{document}

\title{Thermodynamic formalism for contracting Lorenz flows}
\author{Maria Jos\'e Pacifico}
\author{Mike Todd}
\date{\today}
\begin{thanks} {  M.J.P. was partially supported by
    CNPq-Brazil/FAPERJ-Cientista do Nosso Estado E-26/100588/2007/Pronex Dynamical Systems/Scuola Normale Superiore-Pisa.
MT was partially supported by FCT grant SFRH/BPD/26521/2006 and by FCT through CMUP}
\end{thanks}

\address{Maria Jos\'e Pacifico, Instituto de Matem\'atica,
Universidade Federal do Rio de Janeiro,
C. P. 68.530, 21.945-970 Rio de Janeiro, Brazil}
\email{pacifico@im.ufrj.br}

\address{Mike Todd, Centro de Matem\'atica da Universidade do Porto, Rua do Campo Alegre 687, 4169-007 Porto, Portugal\footnote{
{\bf Current address:}\\
Department of Mathematics and Statistics\\
Boston University\\
111 Cummington Street\\
Boston, MA 02215\\
USA }\\}
\email{mtodd@math.bu.edu}
\urladdr{http://math.bu.edu/people/mtodd/}

\begin{abstract}
We study the expansion properties of the contracting Lorenz flow introduced by Rovella via thermodynamic formalism.   Specifically, we prove the existence of an equilibrium state
for the natural potential $\hat\phi_t(x,y, z):=-t\log J_{(x, y, z)}^{cu}$ for the contracting Lorenz flow and for $t$ in an interval containing $[0,1]$. We also analyse the Lyapunov spectrum of the flow in terms of the pressure.
\end{abstract}

\subjclass[2000]{37D35, 
37D45, 
37D30   
37C10    
37C45, 
}

\keywords{Singular-hyperbolic attractor, Lorenz-like flow, thermodynamic formalism, Lyapunov exponents, Multifractal spectra}

\maketitle

\section{Introduction}
\label{sec:intro}

The Lorenz flow \cite{Lorenz} is one of the key examples in the theory of dynamical systems due to the chaotic nature
of its dynamics, its robustness and its  connection with hydrodynamical systems.
 The Lorenz attractor, a `strange attractor' with a characteristic butterfly shape, has extremely rich dynamical properties
which have been studied from a variety of viewpoints: topological, geometric and statistical, see \cite{Sp, ArPa}.
Part of the reason for the richness of the Lorenz flow is the fact that it has an equilibrium, i.e. a fixed point,  accumulated by
regular orbits (orbits through points where the corresponding vector field does not vanish) which prevents the flow from being uniformly hyperbolic.
Indeed it is one of the motivating examples in the study of non-uniformly hyperbolic dynamical systems \cite{MorPacPu}.  It is also robust in the sense that nearby flows also possess strange attractors with similar properties.  The Lorenz equations can be studied using geometric models of the Lorenz flow, see \cite{Afr_etc, GuckWi}.  It was shown by Tucker \cite{Tucker} that the Lorenz equations do indeed support a geometric Lorenz flow.

The classical geometric Lorenz flow is expanding.  This corresponds to the Lyapunov exponents at the origin, $\lambda_s$ and $\lambda_u$, the stable and unstable exponents respectively, having $\lambda_u+\lambda_s>0$.
A Rovella-like attractor  \cite{Rovella} is the maximal invariant set of a
geometric flow whose construction is very similar to the one
that gives the geometric Lorenz attractor,
\cite{Afr_etc, GuckWi,ArPa}, except for the fact that the
eigenvalue relation $\lambda_u + \lambda_s > 0$ there is
replaced by $\lambda_u + \lambda_s < 0$.
As in the case for the geometric Lorenz attractor,
a Rovella attractor has a global cross section:
a line $\Gamma \subset \R^3$, and a first return map
$\tilde f$ defined on $\R^3 \setminus \Gamma$ that preserves
a one-dimensional foliation which is contracted under
the action of $\tilde f$. Thus, as in the case of the geometrical
model for the Lorenz flow, it is possible to study the
dynamics of a Rovella flow through a $1$-dimensional map
obtained quotienting though the leaves of this contracting
foliation.
Unlike the one-dimensional
Lorenz map obtained from the usual construction of the
geometric Lorenz attractor, a one-dimensional Rovella map
has a criticality at the origin, caused by the eigenvalue
relation $\lambda_u + \lambda_s<0$.
In Figure~\ref{fig-Lorenz3D} below we present some possible
``Rovella one-dimensional maps'' obtained through
quotienting out the stable direction of the return map to
the global cross-section of the attractor. In Section
\ref{sec:constructionRovella} we explain this procedure.

In this paper we will study the geometric model of the Rovella-like attractor
from the point of view of thermodynamic formalism.  This theory studies the multifractal properties of the system (see \cite{Pesbook}), providing precise characterisations of the dimension theory, as well as giving insight into the statistical properties of the system.  In the study of thermodynamic formalism, one takes a dynamical system $(f_s)_s:X\to X$ and a relevant potential $\phi:X\to \R$ and studies the statistical properties of the system through the properties of the pressure and the equilibrium states of the triple $(X, (f_s)_s, \phi)$.  This theory was developed for hyperbolic dynamical systems by Sinai, Ruelle and Bowen
\cite{Sinai, Ruelle, Bowen} in the context of H\"older potentials on hyperbolic dynamical systems, and has mainly been applied to Axiom A systems and Anosov
diffeomorphisms, see e.g.\ \cite{Baladi,Kellbook}.

The potentials which tell us most about the system involve the Jacobean of the flow/map.  For discrete smooth conformal systems one would consider $\phi=\log |Df|$.  Knowledge of the pressure and equilibrium states with respect to the family $t\phi$, the family of `natural/geometric' potentials, give us very fine information on the expansion properties of the system.  This is the Lyapunov spectrum.  For discrete uniformly hyperbolic systems the theory of thermodynamic formalism is already fairly well developed, see for example \cite{Urb_surv, Ols}.  However, for discrete conformal non-uniformly hyperbolic dynamical systems the theory is currently seeing a lot of activity, for example \cite{Nak,PolWei, GelRa, BTeqgen, BTeqnat, GelRaPrz, PrzR-L, IomTeq, IomTlyap}.  In the case of flows, thermodynamic formalism has been studied in the hyperbolic case in \cite{Bowen, Wei, BarSau_hyp_flow, PesSad, BarSau_var_flow, Cher}. In the non-uniformly hyperbolic case, the main contribution was made by Barreira and Iommi \cite{BarIom} who considered thermodynamic formalism for suspension flows over countable Markov shifts.

To understand the Jacobean for the Lorenz flow we note that the tangent space can be split into three directions: the flow direction, which has neutral expansion, the expanding/unstable direction and the contracting/stable direction.  The study of the Lyapunov spectrum in the Rovella flow case is particularly complicated since, in contrast to the expanding Lorenz case, we have to deal with points where a derivative is zero.

The interesting part of the dynamics is in the expanding part of the attractor, so we consider the Jacobean restricted to the expanding direction.  This situation can be modelled by a suspension flow over a countable Markov shift as in \cite{BarIom}, but our approach uses a simpler suspension flow allied to the results of Iommi and Todd \cite{IomTeq, IomTlyap}.  (Note that in \cite{IomTeq, IomTlyap} a countable Markov shift was used to produce the equilibrium states and information on the Lyapunov spectra.)  Our analysis captures the points which are typical for the physical measure as well as for many other points captured by nearby measures.  As mentioned above, we use the common approach (see \cite{MorPacPu, Metz_SRB, MetzMor, HolMel, ArPacPuVi, GalPac}) of analysing Lorenz-like flows by taking Poincar\'e sections in such a way that we obtain a one-dimensional map.   Note that our results hold for a larger class of maps than just the Rovella type of Lorenz flow.  We consider flows which have a Poincar\'e section with the dynamics of maps considered in the appendix of \cite{IomTeq}.

\section{The main results}
\label{sec:main res}

As sketched in the introduction, we prove the existence of an equilibrium state for the potential $\hat\phi_t(x,y, z):=-t\log J_{(x, y, z)}^{cu}; \,\,t\ge 0,$ for maps $\hat f=(\hat f_s)_s$ in a class $\flowclass$ of flows that includes a contracting Lorenz flow introduced in \cite{Rovella}.  This is the natural potential to consider for these maps.  Indeed, analysis of this potential also allows us to express the Lyapunov spectrum of the flow in terms of the pressure. Recall that a \emph{contracting Lorenz flow $\hat f$} is a flow with a unique singularity
at the origin $0$, defined in a compact neighbourhood
$\cC$ of  $0$ satisfying the following properties:
\newcounter{Lcount}
\begin{list}{(\arabic{Lcount})}
{\usecounter{Lcount} \itemsep 1.0mm \topsep 0.0mm \leftmargin=7mm}
\item the restriction of the flow to a small neighbourhood $\cQ \subset \overline{\cQ}\subset \cC$
is a linear flow $L$ with a unique singularity at $0$,
\item the eigenvalues $\lambda_i,\,\, 1\le i \le  3,$ of $DL(0)$ are all real and satisfy
$\lambda_2< \lambda_3 < 0 < -\lambda_3 < \lambda_3$.
\end{list}

It was proved in \cite{Rovella} that, under certain additional conditions,
the maximal positive $\hat f$-invariant set $\Lambda\subset \cQ$ is a transitive attractor.

In order to give our main results for these systems we first need to introduce some basic notions from thermodynamic formalism.
For references on the general theory, see for example \cite{Bowen, Kellbook, Pesbook, Cher}.

\subsection{Thermodynamic formalism}
\label{subsec:therm}

We begin by giving definitions for discrete time dynamical systems $f:X \to X$, and will then generalise to the flow case.  We let $$\M=\M(f):=\left\{\text{measures }\mu: \mu\circ f^{-1}=\mu \text{ and } \mu(X)=1\right\}.$$  Given a \emph{potential} $\phi: X \to [-\infty, \infty]$, the \emph{pressure} of $\phi$ with respect to $f$ is defined as
\begin{equation*}
P(\phi) =P(f,\phi):= \sup \left\{ h(\mu) + \int \phi~d\mu :  \mu \in \M \textrm{ and } - \int \phi~d\mu < \infty\right\},
\end{equation*}
where $ h(\mu)$ denotes the measure theoretic entropy of $f$ with respect to $\mu$.   As in \cite{Kellbook}, the quantity $h(\mu)+\int\phi~d\mu$ is
referred to as the \emph{free energy} of $\mu$ with respect to $(X,f,\phi)$.
A measure $\mu\in \M$ maximising the free energy, i.e. with $h(\mu)+\int\phi~d\mu=P(\phi)$, is called an \emph{equilibrium state}.

Similarly for a flow $\hat f$, we define the set of $\hat f$-invariant measures as
$$\M=\M(\hat f):=\left\{\text{measures } \hat\mu : \hat\mu(\hat f_s^{-1}(A))=\hat\mu(A) \text{ for all } s\ge 0 \text{ and } \hat\mu(\hat X)=1\right\}.$$
Moreover, for a potential $\hat\varphi:\hat X \to \R$, the pressure of $(\hat X, \hat f,\hat\varphi)$ is defined as
$$P(\hat f,\hat\varphi):=\sup\left\{h(\hat f, \hat\mu)+\int\hat\varphi~d\hat\mu:\hat\mu\in \M(\hat f) \text{ and } -\int\hat\varphi~d\hat\mu<\infty\right\}.$$
(For more details of the entropy of flows, see Section~\ref{sec:therm for flows}, in particular \eqref{eq:ent of flow}.)

For a flow $(\hat f_s)_s:\hat X\to \hat X$ in our class $\flowclass$, as in \cite{ArPacPuVi, MetzMor, MorPacPu} at each point $(x,y,z)\in \R^3$, the tangent space for the flow $\hat f$ has a splitting $E_x^{cu}\oplus E_x^s$ where $E_x^s$ is tangent to the stable direction and $E_x^{cu}$ is tangent to the centre unstable direction (see Section~\ref{sec:constructionRovella} for more details).
We are interested in the potential
\begin{equation}
\hat\phi_t(x,y, z):=-t\log J_{(x, y, z)}^{cu}\label{eq:hat phi t}
\end{equation}
which is the Jacobean of the differential in the centre unstable direction at the point $(x, y, z)$.
This potential gives rise to a natural class of equilibrium states, which can be seen as selecting out the sets in $\R^3$ with different rates of asymptotic expansion by the flow (see also Theorem~\ref{thm:lyap spec flow}).
For the following theorem, our first main theorem for contracting Lorenz flows, we consider this potential for $t\in (t^-, t^+)$.  The values of $t^-\le 0$ and $t^+\ge 1$ are given below in \eqref{eq:t plus minus}.

\begin{maintheorem}
Let $\hat f\in \flowclass$.  Then for all $t\in (t^-, t^+)$, there is an equilibrium state
$\hat\mu_t$ for $\hat\phi_t(x,y, z)$.
\label{thm:eq for flow}
\end{maintheorem}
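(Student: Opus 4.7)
The plan is to reduce the thermodynamic formalism of $\hat f$ to that of a one-dimensional Rovella-type map via a suspension-flow representation, and then invoke the equilibrium-state results of \cite{IomTeq}.

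First I would use the construction of Section~\ref{sec:constructionRovella} to obtain a global Poincar\'e section $\Sigma\subset \cC$ with return map $F:\Sigma\setminus\Gamma\to \Sigma$ preserving a uniformly contracting one-dimensional stable foliation $\cF^s$; quotienting by $\cF^s$ yields a 1D map $f:I\setminus\{0\}\to I$ of the class treated in the appendix of \cite{IomTeq}, whose critical point at $0$ reflects the relation $\lambda_u+\lambda_s<0$. Let $\tau$ be the first-return time of $\hat f$ to $\Sigma$; by uniform contraction and bounded distortion of $\cF^s$, $\tau$ descends to a roof function on $I$ up to a bounded coboundary, so $\hat f$ is (up to a null set) conjugate to the suspension of $f$ by $\tau$. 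By Ambrose--Kakutani together with the Abramov formula, $\hat f$-invariant probabilities correspond bijectively to $f$-invariant probabilities $\mu$ with $\mu(\tau)<\infty$ via $\hat\mu=(\mu\times\mathrm{Leb})/\mu(\tau)$, with entropies related by $h(\hat f,\hat\mu)=h(f,\mu)/\mu(\tau)$.

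Next I would translate the potential. The induced potential
$$\Phi_t(x):=\int_0^{\tau(x)}\hat\phi_t(\hat f_s(x))\,ds$$
descends to the 1D quotient and, by bounded distortion, is cohomologous modulo a bounded term to $-t\log|f'(x)|$. The Abramov variational principle then yields
$$P(\hat f,\hat\phi_t)=\inf\{s\in\R : P(f,-t\log|f'|-s\tau)\le 0\},$$
and $\hat\mu_t$ is an equilibrium state for $(\hat f,\hat\phi_t)$ if and only if its base projection $\mu_t$ is an equilibrium state for $(f,-t\log|f'|-P(\hat f,\hat\phi_t)\tau)$ satisfying $\mu_t(\tau)<\infty$. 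For $t\in(t^-,t^+)$ as in \eqref{eq:t plus minus}, the appendix of \cite{IomTeq} supplies exactly such an equilibrium state $\mu_t$ for the 1D map, and lifting by the Abramov formula produces the required $\hat\mu_t$.

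The main obstacle is the integrability condition $\mu_t(\tau)<\infty$: since the flow has a singularity at $0$, the return time $\tau$ diverges as one approaches the preimage of $0$ in $\Sigma$, i.e.\ the critical point of $f$, at a rate controlled by the eigenvalues of $DL(0)$. This integrability is the essential constraint on the admissible range of $t$ and pins down the values $t^-\le 0\le 1\le t^+$. A secondary technical point is to confirm that $\cF^s$ is regular enough that the $cu$-Jacobean potential descends to the 1D quotient with only a bounded coboundary error, so that equilibrium states on the 2D return map and its 1D quotient coincide; this follows from the standard Rovella-type estimates once the 1D framework is set up.
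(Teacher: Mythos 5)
Your proposal takes a genuinely different route from the paper's. The paper does not invoke the Bowen--Walters/Abramov pressure equation $P(f,\Delta_{\hat\phi_t}-s\tau)=0$ at all. Instead it proceeds through a chain of correspondences: a bijection $\tilde p:\M(f)\to\M(\tilde f)$ built from the \cite{ArPacPuVi} machinery (Lemma~\ref{lem:tilde p}), an equilibrium-state correspondence $I\leftrightarrow\tilde I$ (Lemma~\ref{lem:eq pushed}), integrability of the roof against the one-dimensional equilibrium state $\mu_t$ (Lemma~\ref{lem:roof integr}), and then a direct Abramov-type lift (Lemma~\ref{lem:eq flow}, normalising the base pressure to zero). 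The upshot of the paper's argument is that the equilibrium state for $\phi_t=-t\log|Df|$ itself is the object that gets lifted to $\hat\mu_t$; Theorem~\ref{thm:eq states} then supplies $\mu_t$ directly, and the range $(t^-,t^+)$ is exactly the one that appears.

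The serious issue with your route is the final invocation of \cite{IomTeq}. Your characterisation reduces the problem to finding an equilibrium state for the potential $-t\log|Df|-P(\hat f,\hat\phi_t)\,\tau$, where $\tau$ is the (unbounded) return time, but the appendix of \cite{IomTeq}, as encoded in Theorem~\ref{thm:eq states}, gives equilibrium states only for $\phi_t=-t\log|Df|$. You gesture at a fix by saying $\tau$ is, up to a bounded term, a scalar multiple of $-\log|Df|$, so $\phi_t-s\tau$ is of the form $\phi_{t'}$ plus a bounded function. But this has two unaddressed consequences. First, a bounded additive perturbation that is not a coboundary can change which measure realises the supremum, so ``cohomologous modulo a bounded term'' is not enough to transfer the equilibrium state across. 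Second, even if one could absorb the perturbation, the reparametrisation $t\mapsto t'=t-sC$ (with $s=P(\hat f,\hat\phi_t)$) shifts the parameter, so the existence range one gets for $t'$ via Theorem~\ref{thm:eq states} need not pull back to exactly $(t^-,t^+)$ in $t$; some additional argument identifying the two intervals is required. Until these two points are handled, the claim that \cite{IomTeq} ``supplies exactly such an equilibrium state'' is a gap.

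A smaller point: you jump from the flow to the one-dimensional map in one step, ``quotienting by $\cF^s$.'' The paper is careful to pass through the two-dimensional return map $\tilde f$ and to verify that the natural projection $\iota_*:\M(\tilde f)\to\M(f)$ is a bijection onto $\M(f)$ with entropy preserved (Lemma~\ref{lem:tilde p} and Lemma~\ref{lem:eq pushed}); this step uses uniform contraction along $\cF^s$ together with uniform continuity of the test potential, and it is where the ``bounded distortion of $\cF^s$'' you mention is actually put to work. Your sketch should make that intermediate reduction explicit, since it is what justifies that equilibrium states for $(\tilde I,\tilde f,\Delta_{\hat\phi_t})$ and for $(I,f,\phi_t)$ coincide.
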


Given a potential $\hat\phi:\hat I \to \R$, and $\alpha\in \R$, let
$$K^{\hat\phi}(\alpha):=\left\{(x,y,z)\in \hat I: \lim_{u\to \infty}\frac1u\int_0^u \varphi(\hat f_s(x,y,z))~ds = \alpha\right\}$$
and
$$K^{\hat\phi'}:=\left\{(x,y,z)\in \hat I: \lim_{u\to \infty}\frac1u\int_0^u \varphi(\hat f_s(x,y,z))~ds \text{ does not exist}\right\}.$$

In our second main theorem for contracting Lorenz flows, we take
$K(\alpha):=K^{\log J^{cu}}(\alpha)$.  The \emph{Lyapunov spectrum} of $(\hat I, \hat f)$ is the map
$$\alpha\mapsto \Lyap_{\hat f}(\alpha):= \dim_H(K(\alpha)\cap\Lambda),$$
where $\dim_H$ denotes the Hausdorff dimension of a set.

In our analysis of $\Lyap_{\hat f}$, we will use the potentials $\hat\phi_t$, for certain parameters $t\in \R$, and their equilibrium states.  The flows we consider and the potentials $\hat\phi_t$  have a natural relation with piecewise $C^2$ maps $f$ on an interval and the natural potentials \begin{equation}
\phi_t(x):=-t\log|Df(x)|. \label{eq:phi t}
\end{equation}
Often it can be shown that an equilibrium state for one such potential $\phi_1$ is an absolutely continuous invariant probability measure (acip) $\mu_{ac}$.

Defining, for a measure $\mu\in \M$, the \emph{Lyapunov exponent} of $(I, f,\mu)$ by
$$\lambda(\mu):=\int\log|Df|~d\mu,$$
any equilibrium state $\mu_t$ for $\phi_t$ therefore satisfies $$h(\mu_t)-t\lambda(\mu_t)=P(\phi_t).$$
We also define the pressure function:
$$p(t):=P(-t\log|Df|).$$

In the following theorem, we give a relation between Lyapunov spectrum and the pressure function on a certain domain $(\alpha_1,\alpha_2)\subset \R$ which is defined later in \eqref{eq:lyap dom}.  Note that in general the interval $(\alpha_1, \alpha_2]$ contains the Lyapunov exponents of both the SRB measure and the measure of maximal entropy.
We restrict our analysis to a subset of maps $\flowclass_{ac} \subset \flowclass$, which will be defined below.

\begin{maintheorem}
Let $\hat f\in \flowclass_{ac}$.  Then for all $\alpha\in (\alpha_1,\alpha_2)$, the Lyapunov spectrum satisfies the following relation
\begin{equation*}
\Lyap_{\hat f}(\alpha)-2 = \frac{1}{\alpha} \inf_{t \in \mathbb{R}}\left(p(t) + t \alpha\right) =
 \frac{1}{\alpha} \left( p(t) +t_{\alpha} \alpha \right).
\end{equation*}
where $t_\alpha$ is such that $Dp(t_\alpha) = -\alpha$.
\label{thm:lyap spec flow}
\end{maintheorem}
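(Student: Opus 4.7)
The plan is to reduce the flow-level multifractal problem to a one-dimensional one via the Poincar\'e section construction outlined in the introduction, and then invoke the one-dimensional Lyapunov spectrum machinery of Iommi--Todd \cite{IomTlyap}, which is applicable because the hypothesis $\hat f \in \flowclass_{ac}$ is tailored so that the quotient 1D map $f$ lies in the class treated in the appendix of \cite{IomTeq}. The first ingredient is the local product structure of the attractor $\Lambda$: centre-unstable leaves (one-dimensional, parametrised by $I$) times stable leaves (one-dimensional, contracted by the Poincar\'e return) times flow lines (one-dimensional). Since $K(\alpha)$ is saturated along flow orbits by the definition of the time average and is independent of the stable coordinate (the stable foliation contracts under the return map, so asymptotic time averages coincide on a stable leaf), a Marstrand-type slicing argument using the bi-H\"older regularity of the stable foliation gives $\dim_H(K(\alpha)\cap \Lambda) = \dim_H \pi(K(\alpha)) + 2$, where $\pi$ denotes projection through the stable foliation and through the flow direction to the one-dimensional quotient. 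This accounts for the $-2$ in the statement.

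Next I would identify $\pi(K(\alpha)\cap \Lambda)$ as a Lyapunov-type level set for the 1D map. The chain rule along a single return to the cross section yields $\int_0^{\tau(x)}\log J^{cu}(\hat f_s(x))\,ds = \log|Df(x)|$, so the flow Birkhoff average of $\log J^{cu}$ converging to $\alpha$ is equivalent, along the base orbit of $x$, to the Birkhoff-ratio condition
$$\frac{\sum_{i=0}^{N-1}\log|Df|(f^i x)}{\sum_{i=0}^{N-1}\tau(f^i x)} \longrightarrow \alpha.$$
Via the suspension-flow multifractal correspondence in the style of Barreira--Iommi \cite{BarIom}, this ratio-level set is dimensionally the same as the level set computed by the Legendre transform of $p$, which the 1D Iommi--Todd formalism evaluates as $\frac{1}{\alpha}\inf_{t\in\R}(p(t)+t\alpha)$. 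The minimiser $t_\alpha$ with $Dp(t_\alpha)=-\alpha$ exists because $p$ is strictly convex and differentiable on $(t^-,t^+)$, a fact established alongside Theorem~\ref{thm:eq for flow}; the interval $(\alpha_1,\alpha_2)$ is defined precisely as the image of $-Dp$ on this parameter range, so every $\alpha$ inside is realised by an equilibrium state $\mu_{t_\alpha}$ of positive Lyapunov exponent that saturates $K(\alpha)$ up to a set of strictly smaller dimension.

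The principal obstacle is controlling the unbounded roof function $\tau$, which blows up logarithmically at the fixed point at the origin: this breaks the standard symbolic suspension-flow framework and threatens the tightness of the passage from the Birkhoff ratio to a genuine Birkhoff-average level set, since one needs equality of Hausdorff dimensions rather than merely of measures. I would handle this using the inducing scheme underlying the proof of Theorem~\ref{thm:eq for flow}: inducing away from a small neighbourhood of the singularity yields a uniformly hyperbolic symbolic model on which the Iommi--Todd spectral machinery applies directly, and a tail estimate on the induced return time bounds the dimensional contribution of orbits that linger near the origin, showing it to be negligible throughout $(\alpha_1,\alpha_2)$. A secondary but non-trivial technical point is to verify the Marstrand slicing for the projection $\pi$ given that the stable foliation's H\"older exponent degenerates near the singularity; this is again absorbed by restricting to the inducing domain and passing to a limit.
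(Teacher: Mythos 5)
Your overall architecture — reduce the flow to the one-dimensional quotient via Poincar\'e sections, pick up the $+2$ from the stable and flow directions, and finish with the Iommi--Todd formula for $\Lyap_f$ — is the same as the paper's, but the two arguments diverge in the details of each of these steps, and your version has a genuine gap at its centre.

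For the dimension shift by $2$, the paper does not run a Marstrand slicing argument or invoke H\"older regularity of the stable foliation at all. It observes that the level sets $K(\alpha)$ are saturated with respect to the stable foliation and the flow direction, so that $K(\alpha)$ is (under the semiconjugacy $\check p$ from the suspension model) a product of $J(\alpha)$ with two line segments, and then it controls the distortion introduced by $\check p$ using the Tubular Flow Theorem of Palis--de Melo: on a neighbourhood of any regular point the flow is conjugated to a parallel flow on a cube, and that conjugacy is locally bilipschitz. This is lighter than what you propose, and in particular it does not need any inducing scheme to control degeneracy of the foliation near the singularity — the bilipschitz estimate is a pointwise, local statement at regular points, and every point of $\Lambda\cap\tilde I$ is regular.

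The more serious issue is the step where you reformulate the flow Birkhoff average as the ratio
\[
\frac{\sum_{i<N}\log|Df|(f^ix)}{\sum_{i<N}\tau(f^ix)}\longrightarrow\alpha
\]
and then claim that ``via the suspension-flow multifractal correspondence in the style of Barreira--Iommi, this ratio-level set is dimensionally the same as the level set computed by the Legendre transform of $p$.'' That last sentence is where your proof breaks. The Legendre transform $\frac1\alpha\inf_t(p(t)+t\alpha)$ computes the dimension of $J(\alpha)=\{x:\frac1n\log|Df^n(x)|\to\alpha\}$, the \emph{unnormalised} one-dimensional level set; a ratio level set of the form above is governed, in the Barreira--Iommi framework, by a conditional variational principle that involves the roof function explicitly (e.g.\ zeros of $q\mapsto P(\phi-q r)$ or a constrained supremum $\sup\{h(\mu)/\lambda(\mu):\int\phi\,d\mu/\int r\,d\mu=\alpha\}$), and there is no reason in general for this to coincide with the Legendre transform of $p$ alone unless the roof integral is normalised to $1$ along the relevant measures. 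You cannot invoke the ratio picture and then quietly drop the roof function. Either you must show that the roof term does not alter the extremiser (which is precisely what the paper's direct assertion that $x\in J(\alpha)$ implies $(x,y,z)\in K(\alpha)$ amounts to, and which the paper does not argue through the ratio at all), or you must carry the roof function through the conditional variational principle and verify that it collapses to the stated formula. As written, this is a gap, not a variation.

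Finally, the inducing scheme and tail estimates you introduce to handle the logarithmic blow-up of $\tau$ are not used in the paper's proof of this theorem; they would only be needed if you pursued the Barreira--Iommi symbolic route, which the paper deliberately avoids by working with the suspension over $(\tilde I,\tilde f)$ directly and delegating all the hard one-dimensional thermodynamics to the appendix of \cite{IomTeq} and to \cite{IomTlyap}.
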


\begin{remark}
Note that it can be shown that for potentials $\phi_t$ and $\hat\phi_t$, with corresponding equilibrium states $\mu_t$ and $\hat\mu_t$, we have
$\Lyap_{\hat f}(\alpha)-2=
 \frac{h(\mu_{t_\alpha})}{\alpha} = \frac{h(\hat\mu_{t_\alpha})}{\alpha}$ for $\alpha$ and $t_\alpha$ as in Theorem~\ref{thm:lyap spec flow}.  Moreover,  $\hat\mu_{t_\alpha}(\R^3\sm \Lyap_{\hat f}(\alpha))=0$.
\end{remark}

We will prove Theorems~\ref{thm:eq for flow} and \ref{thm:lyap spec flow} by first reducing the study of maps in $\flowclass$ to a class of maps on the unit square and then to a further class of maps on the unit interval.  This is explained in the following two sections.

\section{Construction of a Rovella flow}
\label{sec:constructionRovella}

In this section we will consider a class of three dimensional flows which will be defined axiomatically.
To show that these axioms are verified in the geometric contracting Lorenz models we give a detailed
construction of this model.

We first analyse the dynamics in a neighbourhood of the
singularity at the origin, and then we complete the flow, imitating the butterfly shape of the original Lorenz flow.

We start with a linear system $(\dot x, \dot y, \dot
z)=(\lambda_1 x,\lambda_2 y, \lambda_3 z)$, with $\lambda_i$, $1\le
i\le 3$ satisfying the relation
\begin{equation}
 \label{eq:eigenvalues}
-\lambda_2 > - \lambda_3> \lambda_1 > 0,\quad \beta>\ell +3,\quad \beta= - \frac{\lambda_2}{\lambda_1},\,\,
\ell=- \frac{\lambda_3}{\lambda_1}.
\end{equation}

This vector field
will be considered in the cube $[-1,1]^3$ containing the origin $(0,0,0)$.

 For this linear flow, the trajectories are given by
\begin{align}\label{eq:LinearLorenz}
\hat f_s(x_0,y_0,z_0)=
(x_0e^{\lambda_1s}, y_0e^{\lambda_2s}, z_0e^{\lambda_3s}),
\end{align}
where  $(x_0, y_0, z_0)\in\RR^3$ is an
arbitrary initial point near $p=(0,0,0)$.

Now let $\Sigma=\big\{ (x,y,1) : |x|\le
{\scriptstyle{1/2}},\quad |y|\le{\scriptstyle{1/2}}\big\}$ and consider \begin{align*}
\Sigma^-&=\big\{ (x,y,1)\in \Sigma : x<0 \big\},&
\qquad
\Sigma^+&=\big\{ (x,y,1)\in \Sigma : x>0 \big\} \text{ and}
\\
\Sigma^*&=\Sigma^-\cup \Sigma^+=\Sigma\setminus\Gamma,& \text{where}\quad\quad\quad
\Gamma&=\big\{(x,y,1)\in \tilde{I} : x=0 \big\}.
\end{align*}

$\Sigma$ is a transverse section to the linear flow and
every trajectory crosses $\Sigma$ in the direction of
the negative $z$ axis.

Consider also
$\tilde{\Sigma}=\{ (x,y,z) : |x|=1\}=\tilde{\Sigma}^-\cup {\tilde{\Sigma}}^+$
with ${\tilde{\Sigma}}^{\pm}=\{ (x,y,z): x=\pm 1\}$.  For each $(x_0,y_0,1)\in \Sigma^*$ the time $s$
such that $\hat f_{s}(x_0,y_0,1)\in\tilde{\Sigma}$ is given by
\begin{equation}\label{eq:tempo}
s(x_0)=-\frac{1}{\lambda_1}\log{|x_0|}
\end{equation}
which depends on $x_0\in \tilde\Sigma^*$ only and is such that $s(x_0)\to+\infty$ when $x_0\to0$.

Hence, using (\ref{eq:tempo}), we get (where $\sgn(x)=x/|x|$ for $x\neq0$)
\begin{align*}
\hat f_{s(x_0)}(x_0,y_0,1)=
\big( \sgn(x_0),  y_0e^{\lambda_2\cdot s(x_0)}, e^{\lambda_3\cdot s(x_0)}\big)
=
\big( \sgn(x_0),
y_0|x_0|^{\beta},
|x_0|^{\ell}\big).
\end{align*}

Consider
 $L:\Sigma^*\to\ {\tilde{\Sigma}}^{\pm}$  defined by
\begin{equation}\label{L}
L(x,y,1)=\big(\sgn(x),
y|x|^\beta,|x|^\ell\big).
\end{equation}

 Clearly each segment $\Sigma^*\cap\{x=x_0\}$ is
taken by $L$ to another segment $\tilde{\Sigma}^\pm\cap\{z=z_0\}$ as
sketched in Figure~\ref{L3Dcusp}.

\begin{figure}[h]
\begin{center}
\includegraphics[width=7cm]{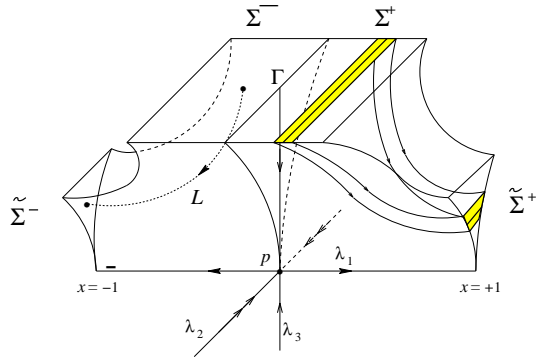}
\end{center}
\caption{\label{L3Dcusp}Behaviour near the origin.}
\end{figure}
It is easy to see that $L(\Sigma^\pm)$ has the shape of a cusp
triangle with a vertex $(\pm 1,0,0)$, a
cusp point at the boundary of the triangle.
Since $L$ is a linear flow, it preserves the vertical
foliation $\cF^s$ of $\Sigma$ whose leaves are given by the lines $x=x_0$.
We shall further assume that
 $L(\Sigma^\pm)$ are uniformly compressed in the $y$-direction.

\subsection{The random turns around the origin}
\label{rotacao}

To imitate the random turns of a regular orbit around the origin and obtain
a butterfly shape for our flow,  we proceed as follows.

Recall that the fixed point $p$ at the origin is hyperbolic and so its
stable $W^s(p)$ and unstable $W^u(p)$ manifolds are well defined, \cite{PaldM}.
Observe that $W^u(p)$ has dimension one and so it has two branches,
$W^{u,\pm}(p)$ and $W^u(p)=W^{u,+}(p)\cup\{p\}\cup W^{u,-}(p)$.

The sets $\tilde{\Sigma}^\pm$ should return to the cross section $\Sigma$ through a
flow described by a suitable composition of a rotation $R_\pm$, an expansion $E_{\pm\theta}$ and a translation $T_\pm$.

The rotation $R_\pm$ has axis parallel to the
$y$-direction, which is orthogonal to the $x$-direction (which is parallel to the local branches $W^{u,\pm}(p)$). More
precisely is such that $(x,y,z)\in \tilde{\Sigma}^\pm$, then
\begin{align}
 \label{derivadadeR}
R_\pm(x,y,z)=
\left(
\begin{array}{cccc}
      0 & 0 & \pm 1       \\
 0 & 1 & 0\\
\pm 1 & 0 & 0
\end{array}
\right).
\end{align}
The expansion occurs only along the $x$-direction, so, the matrix of
$E_{\theta}$ is given by

\begin{align}
 \label{dilatacao}
E_{\pm\rho}(x,y,z)=
\left(
\begin{array}{cccc}
    \rho   & 0 & 0       \\
 0 & 1 & 0\\
0 & 0 & 1
\end{array}
\right)
\end{align}
with $\rho \cdot ({\frac{1}{2}} )^\ell<1 $.
This condition is to ensure that the image of the resulting map is contained in $\Sigma$.

The translation $T_\pm:\RR^3\to \RR^3$ is chosen such that the unstable direction starting
from the origin is sent to the boundary of $\Sigma $ and the image
of both $\tilde\Sigma^\pm$ are disjoint.
 These transformations $R_\pm , E_{\pm\rho}, T_\pm $ take line
segments $\tilde\Sigma^\pm\cap\{z=z_0\}$ into line segments
$\Sigma\cap\{x=x_1\}$, and so does
the composition $T_\pm\circ E_{\pm\rho}\circ R_\pm$.

This composition of linear maps describes a vector field in a region outside
$[-1,1]^3=\hat{I}$ in the sense that one can use the above matrices to define a vector field $V$ such that the time one map of the associated flow
realises $T_\pm\circ E_{\pm\rho}\circ R_\pm$ as a map $\tilde\Sigma^\pm\to \Sigma$.  This will not be explicit here, since the choice of the vector field  is not really important for our purposes.

The above construction allows us to describe, for each $s\in\mathbb{R}$, the orbit $f_s(x)$  of each point $x \in \tilde{I}$: the orbit will
start following the linear field until  $\widetilde{\Sigma}^\pm$ and then it will follow $V$ coming back to $\Sigma$ and so on.
 Let us denote by ${\cB}=\{\hat f_s(x),x\in \Sigma, s\in \mathbb{R}^+\} $ the set where this flow acts.
The geometric contracting Lorenz flow is then the couple $({\cB}, \hat f_s )$ defined in this way.

\begin{figure}[htpb]
\centerline{\includegraphics[width=5cm]{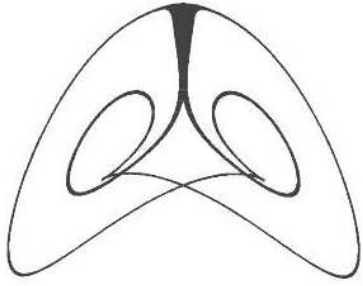}}
\caption{\label{L1D}{A Rovella flow}}
\end{figure}

The Poincar\'e first return map will thus be defined by
$\tilde f:\Sigma^*\to \Sigma$ as
\begin{equation}
 \label{F}
\tilde f(x,y)=\left\{
\begin{array}{ccc}
 T_+\circ E_{+\rho}\circ R_+\circ {L}(x,y,1) & \mbox{for }\, x>0\\
T_-\circ E_{-\rho}\circ R_-\circ {L}(x,y,1) &  \mbox{for }\, x < 0
\end{array}
\right.
\end{equation}

The combined effects of $T_\pm\circ R_\pm$ and ${L}$ on lines implies that the foliation $\cF^s$ of $\Sigma$ given by the lines $\Sigma\cap\{x=x_0\}$ is invariant under the return map. In other words, we have
\begin{itemize}
\item[$(\star)$] {\em for any given
leaf $\gamma$ of $\cF^s$, its image $F(\gamma)$ is
contained in a leaf of $\cF^s$, and the condition $\beta>\ell +3$ guarantees that $\cF^s$ is a $C^3$-foliation.}
\end{itemize}

\subsection{An expression for the first return map}
\label{sec:exprassaodeF}

Combining equations (\ref{L}) with the effect of the rotation composed with
the expansion and the translation,
we obtain that $\tilde f$ must have the form
\begin{equation*}\label{eq:Fgrande}
\tilde f(x,y)=\big(f_{Ro}(x),g_{Ro}(x,y)\big)
\end{equation*}
where
 $f_{Ro}:I\setminus\{0\}\to I$ and
$g_{Ro}:(I\setminus\{0\})\times I\to I$ are given by

\begin{equation}
\label{eq:fLo}
f_{Ro}(x)=
\begin{cases}
 f_1(x^\ell) &  \text{if } x < 0,       \\
f_0(x^\ell) &  \text{if } x > 0,
\end{cases}
\quad \mbox{with $f_i =(-1)^{i}\rho \cdot x+d_i, i\in\{0,1\}$, and }
\end{equation}

\begin{equation*}
g_{Ro}(x,y)=
\begin{cases}
g_1(x^\ell,y\cdot x^\beta) &  \text{if } x < 0,       \\
g_0(x^\ell,y\cdot x^\beta) &  \text{if } x > 0,
\end{cases}
\,\,
\end{equation*}
where  $g_1|I^-\times I\to I $ and $g_0|I^+\times I\to I$ are
suitable affine maps. Here $I^-=(-1/2,0)$, $I^+=(0,1/2)$.
Note that conditions (f1)-(f5) below determine the precise form of the constants $d_i$.

\subsubsection{Properties of the  map $g_{Ro}$}
\label{sec:asegundacoordenada}

Observe that by construction, $g_{Ro}$ in equation (\ref{F}) is piecewise $ C^3$.  Moreover, we have the following bounds on
its partial derivatives:

\begin{enumerate}
 \item[(a)] For all $(x,y)\in\Sigma^*, x> 0$, we have
${\partial_y} g_{Ro}(x,y)=  x^\beta$. As $\beta>1$, $|x|\le 1/2$,
there is $0<\lambda<1$ such that
\begin{equation}
 \label{gy}
|{\partial_y} g_{Ro}| < \lambda.
\end{equation}
The same bound works for $x < 0$.

\item[(b)] For all $(x,y)\in\Sigma^*, x \neq 0 $, we have
${\partial_x} g_{Ro}(x,y)=\beta\cdot x^{\beta-\ell}$.
As $\beta-\ell > 3$  and $|x|\le 1/2$,  we get
\begin{equation}
 \label{gx}
|\partial_x g_{Ro}| < \infty.
\end{equation}
\end{enumerate}
Item (a) above implies that the map $\tilde f=(f_{Ro},g_{Ro})$ is uniformly
contracting on the leaves of the foliation $\cF^s$:
there is $ C >0$ such that

\begin{itemize}
\item[$(\star \star)$]  if $\gamma$ is a leaf of $\cF^s$ and $x,y\in\gamma$, then $\dist\big(\tilde f^n(x),\tilde f^n(y)\big)\le {\lambda^ n}\cdot C\cdot  \dist(x,y)$
\end{itemize}
where $ \lambda$ can be chosen as the one given by equation  (\ref{gy}).

\subsubsection{Properties of the one-dimensional map $f_{Ro}$}
\label{sec:propert-one-dimens}

Next we outline the main features of $f_{Ro}$.

The following properties are easily implied from the
construction of $\hat f$:
\begin{enumerate}
\item[(f1)] By equation (\ref{eq:fLo}) and the way $T_\pm$ is defined,
$f_{Ro}$ is discontinuous at $x=0$. The lateral limits
  $f_{Ro}(0^\pm)$ do exist,  $f_{Ro}(0^\pm)=\pm\frac12$,

\item[(f2)] $f_{Ro}$ is $C^3$ on $I\setminus\{0\}$. As $\beta > \ell + 3$, we get
$\lim_{x\to 0^+}Df_{Ro}(x)=0=\lim_{x\to 0^-}Df_{Ro}(x)$, and the order of $f_{Ro}$ at
$x=0$ is $\ell - 1 > 0$.

By the convexity properties of $f_{Ro}$ we then obtain that
\begin{equation*}
 \label{eq:derivadamaiorqueum}
Df_{Ro}(x) > 0 \quad \quad \mbox{for all}\quad \quad x \in I\setminus\{0\}.
\end{equation*}

\item[(f3)] $\max_{x>0} Df_{Ro}(x)=Df_{Ro}(1),\quad \max_{x<0} Df_{Ro}(x)=Df_{Ro}(- 1).$

\item[(f4)] $-1$ and $1$ are pre-periodic repelling points for $f_{Ro}$.

\item[(f5)] $f_{Ro}$ has negative Schwarzian derivative: $Sf_{Ro} <  a  < 0.$
\end{enumerate}

We say that a map of the interval $f:I\to I$ is a \emph{Rovella map} if it satisfies the properties (f1)--(f5) above.
We denote this class of maps by $\cF_R$.
We refer to a Rovella map which is topologically conjugate to the doubling map as a \emph{full Rovella map}.

\begin{figure}[h]
  \centering
  \includegraphics[width=10cm]{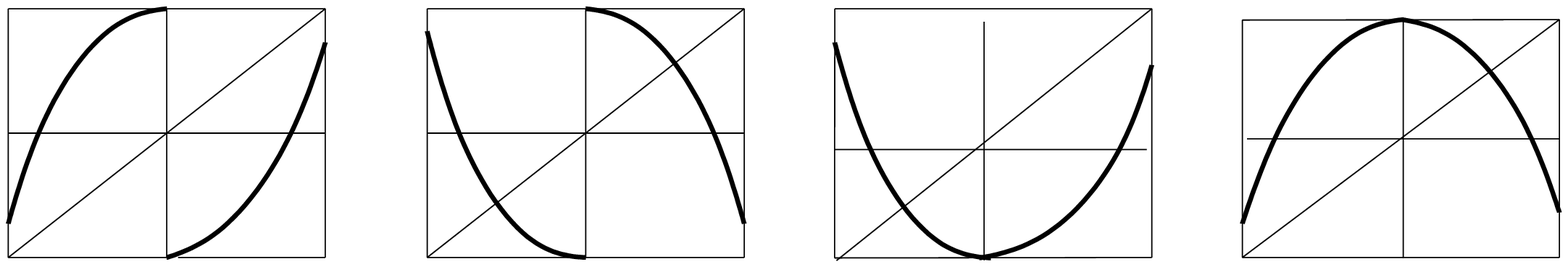}
  \caption{\label{fig-Lorenz3D} Possible Rovella maps}
\end{figure}

\subsubsection{A Rovella attractor is partially hyperbolic}
\label{sec:Rovparthyp}
A compact invariant set $\Lambda \subset M$ is {\em{partially
hyperbolic}} if the tangent bundle $T_\Lambda M$ splits into
a continuous sum of sub-bundles $E\oplus F$, $D\hat f_s$-invariant, with $E$
uniformly contracting, $F$ contains the flow-direction $[\hat f]$
and there are  $0< \lambda  < 1$ and $c > 0$ such that
that for all $s > 0$ and each $x \in \Lambda$
\begin{equation}\label{eq.domination}
\left\|D\hat f_s \mid E^s_x\right\| \cdot
\left\|D\hat f_{-s} \mid E^{cu}_{\hat f_s(x)}\right\| < c \, \lambda^s.
\end{equation}

It follows from the construction and condition (\ref{eq:eigenvalues}) on the eigenvalues
at the origin, that a Rovella attractor is partially hyperbolic.
In particular, besides the existence of the stable (uniformly contracting)
foliation $\cF^s$, there is a centre-unstable $C^1$ foliation $\cF^{cu}$.

\subsubsection{Projection to the interval}
\textit{Notation:} From here on it will often be convenient to use the notation $\tilde I = \Sigma$
(recall $I=[-1/2, 1/2]$) and $\hat I=\cB$, the domain of the Rovella flow constructed above.

The intersection of the foliations $\cF^s$ and $\cF^{cu}$ with the cross section $\tilde I$ induce a coordinate system $(x,y)$ on $\tilde I$, i.e. any point in $\tilde I$ can be expressed as $(x, y)$ where for all small $\eps$, all points $(x+\eps', y)$ for
$|\eps'|<\eps$ are in the same unstable leaf as $(x,y)$, and similarly $(x, y+\eps')$
are in the same stable leaf as $(x,y)$.

We define the map $\iota: \tilde I \to I$ as $\iota(x,y)=x$.  Since our map $\tilde f$ preserves the stable foliation, $(I, f)$ is a factor of $(\tilde I, \tilde f)$.  That is, $f\circ \iota= \iota\circ \tilde f$. To see this, let $(x, y)\in \tilde I$ and suppose that $(x', y')\in \tilde I$ is such that $\tilde f(x, y)=(x', y')$.  From the definition of $f$, we have $x'=f(x)$.  Then we compute $$f\circ\iota(x,y)= f(x)=\iota(f(x), y')=\iota\circ \tilde f(x,y).$$

\section{$C^2$ cusp maps}

As in the previous section, given a Rovella map $\hat f$, if we take Poincar\'e sections twice then the study of the flow reduces to the study of one-dimensional maps.  We will shortly define a wider class of one-dimensional maps which contains this class (and so the corresponding class of flows contains Rovella flows).  First we define the left and right derivatives of a map $f:A \to \R$ for $x\in A$ where $A\subset \R$ as
$$D^-f(x):=\lim_{y \nearrow x}\frac{f(x)-f(y)}{x-y} \text{ and }
D^+f(x):=\lim_{y \searrow x}\frac{f(x)-f(y)}{x-y}$$
respectively.

\begin{definition}
$f:\cup_jI_j\to I$ is a \emph{non-singular cusp map} if there exist constants $C,\alpha>1$
and a finite  collection $\{I_j\}_j$ of disjoint open subintervals of $I$ such that
\begin{enumerate}

\item for all $x,y\in \overline{I_j}$ we have $|Df_j(x)-Df_j(y)|<C|x-y|^\alpha$;

\item $D^+f(a_j), \ D^-f(b_j)$ exist and are equal to 0.

\end{enumerate}
We denote the set of points $a_j, b_j$ by $\crit$.
\end{definition}

Dobbs \cite{Dobthes} considered maps of this type, although he also allowed the maps to have some types of singularities at the boundaries of $I_j$.  Note that the Lorenz-like maps considered in \cite{DiHoLu} are a subset of the cusp maps considered by Dobbs, but with extra expansion conditions.

\begin{remark}
Notice that if for some $j$, $b_j=a_{j+1}$, i.e. $I_j\cap I_{j+1}$ intersect, then $f$ may not continuously extend to a well defined function at the intersection point $b_j$, since the definition above would then allow $f$ to take either one or two values there.  So in the definition above, the value of $f_j(a_j)$ is taken to be $\lim_{x\searrow a_j}f_j(x)$ and $f_j(b_j)=\lim_{x\nearrow b_j}f_j(x)$, so for each $j$, $f_j$ is well defined on $\overline{I_j}$.
\label{rmk:boundaries}
\end{remark}

In this paper we will restrict to a particular subset of this class.
We let $\intclass$ be the class of non-singular cusp maps with
\begin{enumerate}
\item[(3)] negative Schwarzian (i.e. $1/\sqrt{|Df|}$ is convex on each $I_j$).
\end{enumerate}
This condition rules out the singularities considered by Dobbs.  Moreover, it is clear that the class $\intclass_R$ of Rovella maps described in the previous section is included in  $\intclass$. We let $\intclass_{ac}\subset\intclass$ denote the class of maps $f\in \intclass$ which have an acip $\mu_{ac}$ with positive Lyapunov exponent and which has density with respect to Lebesgue in $L^p$ for some $p>1$.  Note that maps in $\flowclass_R$ as well as the non-singular maps in \cite{DiHoLu} are in $\flowclass_{ac}$.  This can be derived for example from \cite[Lemma 2.2]{Collexval} and the exponential decay shown in \cite{DiHoLu, Metz_SRB}.

Next we introduce the class of flows we shall deal with:

\begin{definition}
\label{def:classflows}
The set $\flowclass$ is the class of flows on $\hat I$ which give rise to a Poincar\'e map on $\tilde I$ which is uniformly contracting in the vertical direction, the return time of $(x,y,1)$ is of order $-\log|x|$ (as in \eqref{eq:tempo}) and the map induced in the horizontal coordinate is in $\intclass$.  The set $\flowclass_{ac}$ is defined similarly.
\end{definition}

The study of the potential $\hat\phi_t$ as in \eqref{eq:hat phi t} for maps in $\flowclass$ reduces to the study of potentials $\phi_t$ as in \eqref{eq:phi t}.  In order to prove the existence of equilibrium states for these potentials we need to further restrict our class to maps with good expansion properties.  Note that our conditions are much weaker than those required for Rovella maps.

We define
$$\lambda_M=\lambda_M(f):=\sup\{\lambda(\mu):\mu\in \M\}, \ \lambda_m=\lambda_m(f):=\inf\{\lambda(\mu):\mu\in \M\}.$$
Then for $f\in \intclass$ we let
\begin{equation} \label{eq:t plus minus}
t^-:=\inf\{t:p(t)>-\lambda_M t\} \text{ and }
t^+:= \sup\{t:p(t)>-\lambda_m t\}.
\end{equation}

\begin{remark}
The arguments of \cite{Prz} can be adapted to show that if $f\in \intclass$ then $\lambda_m\ge 0$. This implies that $t^+> 0$.  If $f\in \intclass_{ac}$ then by definition the acip has positive Lyapunov exponent.  Therefore as in \cite[Theorem 3]{Dobcusp}, see also \cite[Theorem 3]{Led}, $\mu_{ac}$ is an equilibrium state for $-\log|Df|$ and moreover $t^+\ge 1$.

  Since $\lambda_M\le \sup_{x\in I}\log|Df(x)|<\infty$ for $f\in \intclass$, we also have $t^-<0$.
Note that if $t^->-\infty$ then $p$ is linear for all $t\le t^-$.
Similarly, if $t^+<\infty$ then $p$ is linear for all $t\ge t^+$.
\label{rmk:prz}
\end{remark}

The first theorem gives equilibrium states for our systems.  In the context of multimodal maps this theory was first considered in \cite{BrKell}, later extended for some cases by \cite{PeSe}, and then for more general cases in \cite{BTeqnat, BTeqgen} and in complete generality in \cite{IomTeq}.  The following theorem is proved in the appendix of \cite{IomTeq}.

\begin{theorem}
Let $f\in \intclass$.  Then for all $t\in (t^-, t^+)$ there is a unique equilibrium state $\mu_t$ for $\phi_t$.  Moreover,
\begin{enumerate}
\item $h(\mu_t)>0$;
\item the map $t\mapsto p(t)$ is $C^1$ in $(t^-, t^+)$;
\item if all $c\in\crit$ are not periodic or preperiodic then $t^-=-\infty$.
\end{enumerate}
\label{thm:eq states}
\end{theorem}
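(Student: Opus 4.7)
The plan is to reduce to a countable Markov shift via inducing, apply the well-developed thermodynamic formalism there, and then push the resulting equilibrium state back down to $(I,f)$. The negative Schwarzian condition provides the distortion control needed for every step.

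First I would build an inducing scheme. Pick a basepoint $x_0 \in I \setminus \crit$ with dense orbit, and fix a small interval $X \ni x_0$ that is disjoint from $\crit$. Using the Koebe/Schwarzian machinery (negative Schwarzian on each $I_j$ and the mild H\"older control on $Df$ given by condition (1) of the definition of non-singular cusp map) I would construct a full Markov first-return-like map $F = f^\tau : X \to X$ whose branches $\{X_i\}$ cover $X$ and for which $F$ has uniformly bounded distortion on each $X_i$. The induced system $(X,F)$ is then conjugate to the full shift on a countable alphabet. The chief technical input here is a Koebe lemma for maps in $\intclass$ together with tail estimates for $\tau$, both of which are standard once one has negative Schwarzian and the $C^{1+\alpha}$-type control near $\crit$.

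Next I would carry out the thermodynamic formalism on the induced system. Set $\Phi_t := S_\tau \phi_t$, the Birkhoff sum of $\phi_t$ over the $\tau$-orbit, and consider the one-parameter family $\Phi_t - p\tau$. By Sarig's pressure formula for CMS one has a well-defined induced pressure $P_F(\Phi_t - p\tau)$ which is decreasing and continuous in $p$; the target is the unique value $p=p(t)$ at which it vanishes, which by Abramov's relation and the variational principle is exactly the original pressure $p(t)=P(\phi_t)$. Showing such a $p$ exists in the window $t\in(t^-,t^+)$ is the core summability/recurrence check: one verifies finite Gurevich pressure and positive recurrence of $\Phi_t - p(t)\tau$ using the tail decay of $\tau$ together with the definition of $t^\pm$ in \eqref{eq:t plus minus}. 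Mauldin--Urbanski/Sarig then supply a unique shift-invariant Gibbs equilibrium $\nu_t$ for the induced potential. Averaging along the tower,
\begin{equation*}
\mu_t := \frac{1}{\int \tau\, d\nu_t}\sum_{i=0}^{\tau-1} f^i_* \nu_t,
\end{equation*}
gives an $f$-invariant probability whose free energy, by Abramov's formula, equals $p(t)$, so $\mu_t$ is an equilibrium state.

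For uniqueness I would argue that any equilibrium state $\mu$ for $\phi_t$ with $t\in(t^-,t^+)$ must have positive Lyapunov exponent and hence lifts to an $F$-invariant measure on $X$. The positivity comes from the strict inequality $p(t) > -t \lambda_m$ defining $t^+$ (and the symmetric statement at $t^-$): if $\lambda(\mu)=0$ the free energy would be at most $p(t)$ only in limit, violating the strict inequality; and measures with $\lambda(\mu) > 0$ can be lifted by a standard Hofbauer--Keller inducing argument, where the difficulty is controlling orbits that spend long excursions near $\crit$ --- here the tail bound on $\tau$ and the $L^p$-type control on the acip (when present) are exactly what is needed. This lifting step is, in my view, the main obstacle; once in place, uniqueness on the induced system transfers back to $\mu_t$.

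Finally the three additional properties follow readily: (1) $h(\mu_t)>0$ is immediate because the induced system has positive entropy and $h(\mu_t) = h(\nu_t)/\int\tau\, d\nu_t$ by Abramov; (2) $C^1$-regularity of $p$ on $(t^-,t^+)$ is obtained by differentiating the implicit relation $P_F(\Phi_t - p(t)\tau) = 0$, using real-analyticity of the induced pressure on the interior of its finiteness domain and the fact that $\int\tau\,d\nu_t<\infty$; (3) when no $c \in \crit$ is periodic or pre-periodic, orbits near $\crit$ do not return to $\crit$ in bounded time, allowing an arbitrarily large contribution to $P(\phi_t)$ for $t$ very negative by concentrating on such orbits, which forces $p(t) > -t\lambda_M$ for all $t\le 0$ and hence $t^-=-\infty$.
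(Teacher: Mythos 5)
The paper does not prove Theorem~\ref{thm:eq states} at all: it simply states that ``the following theorem is proved in the appendix of \cite{IomTeq}'' and moves on. Your proposal---induce to a full countable Markov shift via first-return to a nice interval, invoke Sarig/Mauldin--Urba\'nski thermodynamics for the induced potential $\Phi_t - p\tau$, solve $P_F(\Phi_t-p\tau)=0$, project back by Abramov, and establish uniqueness by lifting any candidate equilibrium state with positive Lyapunov exponent---is a reasonable sketch of the strategy the cited reference actually follows, which itself builds on \cite{BrKell}, \cite{BTeqnat}, \cite{BTeqgen}. So at the level of method you are aligned with the source the paper outsources the proof to.

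Two substantive caveats. First, a single first-return scheme on one fixed interval $X$ is generally not enough to realise equilibrium states across the whole window $(t^-,t^+)$: as $t$ varies the equilibrium states can escape the part of the phase space seen by a fixed scheme, which is exactly why \cite{IomTeq} (and \cite{BTeqnat, BTeqgen} before it) works with a family of inducing schemes indexed by scale, or equivalently with a Hofbauer tower, rather than one fixed $(X,F)$. Without that, the uniqueness step---``any equilibrium state with $\lambda(\mu)>0$ lifts to $(X,F)$''---fails in general; it is the multiplicity of schemes that makes the lifting argument exhaustive. Second, the heuristic you give for item (3) has the geometry reversed: for a cusp map $|Df|$ is \emph{small} near $\crit$, so for $t\ll 0$ the potential $\phi_t=|t|\log|Df|$ is very negative there and equilibrium states for very negative $t$ push mass \emph{away} from the cusps toward the high-derivative region, not toward orbits shadowing $\crit$. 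The non-preperiodicity of $\crit$ does not ``allow large contributions near $\crit$''; its role is to prevent the pressure function from becoming affine, i.e.\ to block a collapse of the equilibrium states onto a single exceptional orbit of maximal exponent as $t\to-\infty$, and the mechanism runs through the combinatorial structure of the induced/Markov system rather than through entropy produced near the critical set.
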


We next consider the Lyapunov spectrum.  For $\alpha \in \R$, we let
\begin{equation*}
J(\alpha):= \Big{\{} x \in I :  \lim_{n \to \infty} \frac{1}{n} \log |Df^n(x)| = \alpha \Big{\}}
\end{equation*}
and
\begin{equation*}
J':= \Big{\{} x \in I : \textrm{the limit} \lim_{n \to \infty} \frac{1}{n} \log |Df^n(x)| \textrm{ does not exist}  \Big{\}}.
\end{equation*}
The unit interval can be decomposed in the following way (the
\emph{multifractal decomposition}),
\begin{equation*}
 [0,1]= J' \cup \left( \cup_{\alpha} J(\alpha) \right).
\end{equation*}
As in Section~\ref{sec:main res}, the function that encodes this decomposition is called the \emph{multifractal spectrum of the Lyapunov exponents} and it is defined by
\begin{equation*}
\Lyap_f(\alpha):= \dim_H(J(\alpha)).
\end{equation*}
This function was studied by Weiss \cite{Wei} in the context of Axiom A maps.

As in the usual theory, if $p(t)$ is $C^1$ at $t\in \R$ and there exists an equilibrium state $\mu_t$ for $-t\log|Df|$ then $Dp(t)=-\lambda(\mu_t)$.
Let \begin{equation}\alpha_1:=D^+p(t^-) \quad \text{ and } \quad \alpha_2:=D^-p(t^+). \label{eq:lyap dom} \end{equation}
 The following is proved as in \cite{IomTlyap}.

\begin{theorem}
Let $f\in\intclass_{ac}$. Then for all $\alpha \in (\alpha_1, \alpha_2)$, the Lyapunov spectrum satisfies the following relation
\begin{equation*}
\Lyap_f(\alpha) = \frac{1}{\alpha} \inf_{t \in \mathbb{R}}\left(p(t) + t \alpha\right) =
 \frac{1}{\alpha} \left( p(t_\alpha) +t_{\alpha} \alpha \right) =\frac{h(\mu_{t_\alpha})}{\alpha},
\end{equation*}
where $t_\alpha$ is such that $Dp(t_\alpha) = -\alpha$.  Moreover, $\Lyap_f$ is $C^1$ in $(\alpha_1, \alpha_2)$.
\label{thm:lyap spec}
\end{theorem}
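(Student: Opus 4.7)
The plan is to follow the inducing-based strategy of \cite{IomTlyap}, taking Theorem~\ref{thm:eq states} as the thermodynamic input.

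First I would establish the existence of $t_\alpha$. By Theorem~\ref{thm:eq states}(2), $p$ is $C^1$ on $(t^-,t^+)$, so $t\mapsto Dp(t)$ is continuous there, and by convexity of the pressure it is monotone. Its image is the open interval with endpoints given by \eqref{eq:lyap dom}, so for each $\alpha \in (\alpha_1,\alpha_2)$ there exists $t_\alpha \in (t^-,t^+)$ with $Dp(t_\alpha) = -\alpha$. Convex duality then gives
\[
\inf_{t \in \R}\bigl(p(t) + t\alpha\bigr) = p(t_\alpha) + t_\alpha\alpha,
\]
and, since $\mu_{t_\alpha}$ is an equilibrium state with $\lambda(\mu_{t_\alpha}) = -Dp(t_\alpha) = \alpha$,
\[
p(t_\alpha) + t_\alpha \alpha = \bigl(h(\mu_{t_\alpha}) - t_\alpha \lambda(\mu_{t_\alpha})\bigr) + t_\alpha \alpha = h(\mu_{t_\alpha}).
\]

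Second, I would prove the lower bound $\Lyap_f(\alpha) \ge h(\mu_{t_\alpha})/\alpha$. The equilibrium state $\mu_{t_\alpha}$ has positive entropy by Theorem~\ref{thm:eq states}(1), hence $\alpha = \lambda(\mu_{t_\alpha}) > 0$ by Ruelle's inequality. The uniqueness clause of Theorem~\ref{thm:eq states} implies ergodicity, and Birkhoff's theorem yields $\mu_{t_\alpha}(J(\alpha)) = 1$. The Hofbauer--Raith pointwise dimension formula for ergodic interval measures with positive Lyapunov exponent then gives $\dim_H \mu_{t_\alpha} = h(\mu_{t_\alpha})/\alpha$, which bounds $\Lyap_f(\alpha)$ from below.

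Third, for the matching upper bound I would import the inducing scheme $F = f^\tau:\bigcup_i X_i \to Y$ used to prove Theorem~\ref{thm:eq states}, conjugating $F$ to the full shift on a countable alphabet with $\mu_{t_\alpha}$ lifting to an ergodic Gibbs measure $\nu_{t_\alpha}$. Applying the multifractal formalism for the Lyapunov spectrum of countable Markov shifts (as developed in \cite{IomTlyap}) to the induced potential $-t\log|DF|$, I obtain a symbolic spectrum whose Legendre transform matches $(p(t)+t\alpha)/\alpha$ via the Abramov relations
\[
h(\mu) = \frac{h_F(\nu)}{\int \tau\, d\nu}, \qquad \lambda(\mu) = \frac{\int \log|DF|\, d\nu}{\int \tau\, d\nu}.
\]
Projecting back to the interval yields $\Lyap_f(\alpha) \le h(\mu_{t_\alpha})/\alpha$. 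The $C^1$ regularity of $\Lyap_f$ then follows from Theorem~\ref{thm:eq states}(2) and the implicit function theorem applied to $\Lyap_f(\alpha) = (p(t_\alpha) + t_\alpha\alpha)/\alpha$.

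The main obstacle is the upper bound: the induced system does not see orbits that stop returning to the base $Y$, so I must show that the exceptional set of points in $J(\alpha)$ failing to enter $Y$ infinitely often has Hausdorff dimension at most $h(\mu_{t_\alpha})/\alpha$. This requires careful tail estimates for $\{\tau > n\}$, and it is precisely here that the strict inclusion $t_\alpha \in (t^-,t^+)$, together with the positivity of entropy from Theorem~\ref{thm:eq states}(1), provides the summability needed to push the covering argument through.
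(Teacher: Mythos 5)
The paper does not actually write out a proof of Theorem~\ref{thm:lyap spec}: the entire "proof" consists of the single line ``The following is proved as in \cite{IomTlyap}.'' Your proposal is, in effect, a reconstruction of that cited argument, and it matches the paper's approach --- take Theorem~\ref{thm:eq states} as the thermodynamic input, use convex duality and the formula $Dp(t)=-\lambda(\mu_t)$ to locate $t_\alpha$ and collapse the Legendre transform, derive the lower bound from the dimension formula $\dim_H\mu_{t_\alpha}=h(\mu_{t_\alpha})/\lambda(\mu_{t_\alpha})$ applied to the (ergodic, positive-entropy, positive-exponent) equilibrium state, and obtain the matching upper bound by lifting to the inducing scheme/countable full shift behind Theorem~\ref{thm:eq states} and invoking the symbolic multifractal formalism, handling the non-returning points by a covering argument. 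You also correctly flag the genuinely delicate point (the exceptional set that never re-enters the inducing base) that the citation to \cite{IomTlyap} sweeps into the black box. In short: correct, and essentially the same route the paper is silently invoking.
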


\begin{remark}
Note that in the case of full Rovella maps, as in the case of the quadratic Chebyshev map, $\alpha_1=\alpha_2$, so the above theorem is empty.  This is shown via the conjugacy to the doubling map. Moreover, for both of these maps there are only two possible Lyapunov exponents, one corresponding to the repelling fixed points and one corresponding to the acip.  The former corresponds to a set of Hausdorff dimension 0 and the latter to a set of Hausdorff dimension 1.
\end{remark}

\begin{remark}
As in Remark~\ref{rmk:prz}, if $f\in \intclass_{ac}$ then  $t^-<0$ and $t^+\ge 1$.  Hence the interval $(\alpha_1, \alpha_2)$ contains the interval $(\lambda(\mu_{ac}), \lambda(\mu_{max})]$ where $\mu_{ac}$ is the acip (the equilibrium state for $-t\log|Df|$ for $t=1$) and $\mu_{max}$ is the measure of maximal entropy (the equilibrium state for $-t\log|Df|$ for $t=0$).
\end{remark}

\section{Thermodynamics of flows}
\label{sec:therm for flows}

\subsection{Thermodynamics of suspension flows}

We will show that for certain natural potentials for the contracting Lorenz flow, we can prove an equivalent of Theorem~\ref{thm:eq states}.  Given the Lorenz flow $\hat f=(\hat f_s)_{s\ge 0}$ on $\hat I\subset \R^3$, as shown Section~\ref{sec:constructionRovella}, we can take a 2 dimensional Poincar\'e section $\tilde I \subset \R^3$ and get the first return map $\tilde f=\hat f_r:\tilde I \to \tilde I$ where $r$ is the return time of a point in $\tilde I$ to $\tilde I$.

We can treat the Lorenz flow as a semiflow over $\tilde I$.  We will describe the abstract setup for semiflows.  For more background on this general setup we refer to \cite{AmbKak} which describes the simple relation between the semiflow and the map on the base which the semiflow is taken over.  Much of what follows is very similar to thermodynamic formalism in the setting of Anosov flows, see \cite{Bowen, Cher}.
However, the singularity causes some difficulties, creating some non-uniform hyperbolicity.  For some information on such systems, but principally for SRB measures, see \cite{Vibook}.  For related recent work on the thermodynamics of semiflows over Countable Markov Shifts to prove our results see \cite{BarIom}.

Suppose that $f:X \to X$ is a dynamical system.  Let the \emph{roof function} $\check r:X \to [0,\infty)$ be a continuous function and consider the space:
$$\check X:=\{(x,s)\in X\times\R: 0\le s\le \check r(x)\}/\sim,$$
where $(x,\check r(x)) \sim (\sigma(x), 0)$ for every $x\in X$.  The \emph{suspension semiflow $\check f=(\check f_s)_{s\ge 0}$ over $f$ with roof function $\check r$} is defined as
$$\check f_s(x,u):=(x, u+s) \text{ if } u+s\in [0, \check r(x)].$$

The relevant class of measures here is
$$\M(f, \check r):=\left\{\mu\in \M(f):\int \check r~d\mu<\infty\right\}.$$
It is shown in \cite{AmbKak} that if $\mu$ is an $f$-invariant measure, possibly infinite, and $\int \check r~d\mu<\infty$ then the product measure $\mu\times m$, where $m$ is Lebesgue, is $\check f$-invariant.  Indeed when $\check r$ is bounded away from zero
there is a canonical identification between $\M(\check f)$ and $\M(f, r)$: the map
$\check\iota:\M(f, \check r) \to \M(\check f)$ given by
\begin{equation}\check\iota(\mu):=\frac{(\mu\times m)|_{\check X}}{(\mu \times m)(\check X)}\label{eq:iota}\end{equation} is a bijection.

The entropy of a flow $(\check X, \check f,\check\mu)$ can be defined by the metric entropy of the corresponding time 1 map.  Abramov \cite{Abra} proved that for semiflows, this is the same as
\begin{equation}
h(\check f, \check\mu)=\frac{h(f, \check\mu\circ\check\iota^{-1})}{\int r~d(\check\mu\circ\check\iota^{-1})}.\label{eq:ent of flow}
\end{equation}
We will take this definition.

Given a potential $\check\varphi:\check X \to \R$, we define the corresponding potential $\Delta_{\check\varphi}: X \to \R$ as
$$\Delta_{\check\varphi}(x):=\int_0^{r(x)} \check\varphi(x,s)~dt.$$
By the identification of $\M(\check f)$ and $\M(f, r)$, coupled with the Abramov formula, we can write
$$P(\check f,\check\varphi)=\sup\left\{\frac{1}{\int r~d\mu}\left(h(f, \mu)+\int\Delta_{\check\varphi}~d\mu\right):\mu\in \M(f, r) \text{ and } -\int\Delta_{\check\varphi}~d\mu<\infty\right\}.$$

\begin{remark}
If the underlying system $(X,f)$ is a countable Markov shift, under certain smoothness conditions on the potential, a Variational Principle for the pressure was proved in \cite{BarIom}.  We could extend that theory to the case of the Lorenz flow with the potentials given below.  However, since this isn't required to prove our results, we will not do the computations here.
\end{remark}

We now return to the map $\tilde f:\tilde I \to \tilde I$.
As in \cite{AmbKak}, there is a map between the suspension flow and the actual flow:
$$\check p: \check I=\left\{(x,s)\in \tilde I \times \R: 0 \le s\le \check r(x)\right\}/\sim \to \hat I.$$
We can define entropy of a measure $\hat\mu\in \M(\hat f)$ as $h(\hat\mu\circ\check p)$.  Similarly we can define the pressure $P(\hat f, \hat\phi)$ as $P(\check f, \check\phi)$.

\subsection{The relation between the one and two dimensional systems}

Later we will relate equilibrium states for $f$ with those for $\tilde f$. Doing this involves comparing the quantity free energies of measures for $f$ with those for $\tilde f$, so we will need a relation between $\M(f)$ and $\M(\tilde f)$.
We will use ideas from \cite{ArPacPuVi} to help with this.  Note that in that paper the authors considered the expansive rather than the contracting Lorenz flows we are concerned with here, but many of those ideas carry through to our case.
As in \cite[Corollary 5.2]{ArPacPuVi}, there is an injection from $\M(f)$ to $\M(\tilde f)$.  Moreover, clearly given $\tilde\mu\in \M(\tilde f)$ the measure $\tilde\mu\circ\iota^{-1}$ is $f$-invariant.  In the following lemma we show that in fact we have a bijection between $\M(f)$ and $\M(\tilde f)$.

\begin{lemma}
There is a bijection $\tilde p:\M(f) \to \M(\tilde f).$  Moreover, $\iota\circ\tilde p$ is the identity on $\M(f)$ and $\tilde p$ and $\tilde p^{-1}$ take ergodic measures to ergodic measures.
\label{lem:tilde p}
\end{lemma}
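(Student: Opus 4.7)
The strategy is to construct $\tilde p(\mu)$ as the \emph{unique} $\tilde f$-invariant lift of $\mu\in\M(f)$ to $\tilde I$, using the uniform contraction of stable leaves recorded in $(\star\star)$. Fix a measurable section $\sigma:I\to\tilde I$ of $\iota$, for instance $\sigma(x):=(x,0)$ in the $(x,y)$ coordinates on $\tilde I$, so that $\iota\circ\sigma=\mathrm{id}_I$. For each $\phi\in C(\tilde I)$ set
\[
\phi_n(x):=\phi\bigl(\tilde f^n\sigma(x)\bigr),\qquad x\in I,\ n\ge 0.
\]
The points $\tilde f^k\sigma(x)$ and $\sigma(f^k(x))$ both lie in the stable leaf through $f^k(x)$, so applying $\tilde f^n$ and invoking $(\star\star)$ gives
\[
\dist\bigl(\tilde f^{n+k}\sigma(x),\ \tilde f^n\sigma(f^k(x))\bigr)\le C\lambda^n\diam(\tilde I).
\]
Uniform continuity of $\phi$ then yields $|\phi_{n+k}(x)-\phi_n(f^k(x))|\le\omega_\phi(C\lambda^n)$ uniformly in $x$, and integrating against the $f$-invariant measure $\mu$ produces $\bigl|\int\phi_{n+k}\,d\mu-\int\phi_n\,d\mu\bigr|\le\omega_\phi(C\lambda^n)$. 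Thus $\bigl(\int\phi_n\,d\mu\bigr)_n$ is Cauchy, with limit $L(\phi)$. By Riesz representation there is a probability measure $\tilde\mu$ on $\tilde I$ with $\int\phi\,d\tilde\mu=L(\phi)$. Since $(\phi\circ\tilde f)_n=\phi_{n+1}$ we obtain $\int\phi\circ\tilde f\,d\tilde\mu=\int\phi\,d\tilde\mu$, so $\tilde\mu\in\M(\tilde f)$; and $(\psi\circ\iota)_n=\psi\circ f^n$, together with $f$-invariance of $\mu$, gives $\iota_*\tilde\mu=\mu$. Set $\tilde p(\mu):=\tilde\mu$.

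For uniqueness---hence bijectivity---suppose $\tilde\nu\in\M(\tilde f)$ also projects to $\mu$. For any $\phi\in C(\tilde I)$, $\tilde f$-invariance of $\tilde\nu$ gives $\int\phi\,d\tilde\nu=\int\phi\circ\tilde f^n\,d\tilde\nu$. Every $y\in\tilde I$ lies in the same stable leaf as $\sigma(\iota(y))$, so $(\star\star)$ yields $\dist(\tilde f^n(y),\tilde f^n\sigma(\iota(y)))\le C\lambda^n\diam(\tilde I)$, whence $|\phi\circ\tilde f^n-\phi_n\circ\iota|\le\omega_\phi(C\lambda^n)$ uniformly on $\tilde I$. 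Integrating and using $\iota_*\tilde\nu=\mu$,
\[
\int\phi\,d\tilde\nu=\int\phi_n\,d\mu+O(\omega_\phi(C\lambda^n))\longrightarrow L(\phi)=\int\phi\,d\tilde p(\mu)\quad(n\to\infty),
\]
so $\tilde\nu=\tilde p(\mu)$. This shows that $\iota_*:\M(\tilde f)\to\M(f)$ is a two-sided inverse of $\tilde p$; in particular $\iota\circ\tilde p=\mathrm{id}_{\M(f)}$, as claimed.

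Preservation of ergodicity by $\tilde p^{-1}=\iota_*$ is immediate, since any $f$-invariant set $A\subset I$ pulls back to an $\tilde f$-invariant set $\iota^{-1}(A)$ of equal measure. Conversely, if $\mu$ is ergodic and $\tilde p(\mu)=\alpha\tilde\mu_1+(1-\alpha)\tilde\mu_2$ with $\tilde\mu_i\in\M(\tilde f)$ and $\alpha\in(0,1)$, then projecting yields $\mu=\alpha\iota_*\tilde\mu_1+(1-\alpha)\iota_*\tilde\mu_2$; ergodicity of $\mu$ as an extreme point of $\M(f)$ forces $\iota_*\tilde\mu_1=\iota_*\tilde\mu_2=\mu$, and the uniqueness established above then gives $\tilde\mu_1=\tilde\mu_2=\tilde p(\mu)$, so $\tilde p(\mu)$ is ergodic. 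The main obstacle is the uniqueness step, which is exactly where the contracting-foliation structure $(\star\star)$ is indispensable: without it, many distinct $\tilde f$-invariant lifts of the same $f$-invariant measure could coexist and the entire identification of $\M(\tilde f)$ with $\M(f)$ would fail.
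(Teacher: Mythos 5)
Your proposal is correct and follows essentially the same strategy as the paper: lift $\mu$ by pushing forward along the dynamics, use the uniform stable-leaf contraction $(\star\star)$ to show the limiting functional is well defined, and use the same contraction to prove uniqueness (equivalently, that $\tilde p\circ\iota_*=\mathrm{id}$ on $\M(\tilde f)$). The minor difference is that you anchor the construction at a fixed section $\sigma(x)=(x,0)$ and run a Cauchy-sequence argument, whereas the paper works with $\inf$/$\sup$ of $\tilde\phi\circ\tilde f^n$ over leaves and cites \cite[Section 5.1]{ArPacPuVi} for convergence; by $(\star\star)$ these are equivalent, since all points of a leaf are eventually carried $C\lambda^n$-close. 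The more substantive divergence is in the ergodicity of $\tilde p(\mu)$: the paper defers to \cite[Corollary 5.5]{ArPacPuVi}, while you give a short self-contained argument via extreme points of $\M(\tilde f)$ combined with the uniqueness of the lift. That argument is clean and correct, and arguably preferable, since it makes the proof independent of the external reference and shows that ergodicity preservation is a formal consequence of the bijection, rather than requiring a separate structural input.
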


\begin{proof}
Given $x\in I$, we let $\xi_x:=\{(x,y):y\in I\}$.  This is a leaf of the $\F^s$ the stable foliation of $\tilde I$.  For any potential $\tilde\phi:\tilde I \to \R$, we define
$$\phi_-(x)=\inf_{y\in \xi_x}\tilde\phi(x,y) \text{ and } \phi_+(x)=\sup_{y\in \xi_x}\tilde\phi(x,y).$$
Following \cite[Section 5.1]{ArPacPuVi}, we can show that if $\mu\in \M_f$ then there is a unique measure $\tilde\mu\in \M(\tilde f)$ such that for any continuous function $\tilde\phi:\tilde I\to \R$, the limits
$$\lim_{n\to \infty}\int\left(\tilde\phi\circ\tilde f^n\right)_-~d\mu \text{ and } \lim_{n\to \infty}\int\left(\tilde\phi\circ\tilde f^n\right)_+~d\mu$$ exist, are equal, and coincide with $\int\tilde\phi~d\tilde\mu$. This determines the map $\tilde p:\M(f) \to \M(\tilde f)$, which is injective. We will show that it is in fact a bijection between $\M(f)$ and $\M(\tilde f)$.

The map $\iota$ gives us a natural way to get from $\M(\tilde f)$ to $\M(f)$.  The lemma will be proved if we can show that given $\tilde\mu\in \M(\tilde f)$, for the measure $\nu:=\tilde\mu\circ\iota^{-1}\in \M(f)$ we have $\tilde\nu=\tilde\mu$
(i.e. $\tilde p\circ\iota$ is the identity on $\M(\tilde f)$).

As in \cite[Corollary 5.2]{ArPacPuVi}, we have, for a continuous $\tilde\phi:\tilde I \to \R$ and $\tilde\nu:=\tilde p(\nu)$, \begin{align*}
\int\tilde\phi~d\tilde\nu & =\lim_{n\to \infty}\int\left(\tilde\phi\circ \tilde f^n \right)_-~d\nu= \lim_{n\to \infty}\int(\tilde\phi\circ \tilde f^n)_-~d(\tilde\mu\circ\iota^{-1})\\
&=\lim_{n\to \infty}\int\left(\inf_{y'\in I}\tilde\phi\circ \tilde f^n(x,y')\right)~d\tilde\mu(x,y)\end{align*}
since the integrand is independent of $y$.
Because $\int\tilde\phi\circ \tilde f^n~d\tilde\mu= \int\tilde\phi~d\tilde\mu$, to complete the lemma it suffices to show that increasing $n$ makes
$$\left|\int\left(\inf_{y'\in I}\tilde\phi\circ \tilde f^n(x,y')\right)~d\tilde\mu(x,y)-\int\tilde\phi\circ \tilde f^n(x,y)~d\tilde\mu(x,y)\right|$$
arbitrarily small (this follows similarly when we replace $\inf$ by $\sup$).
Since $\tilde f$ is uniformly contracting on each $\xi_x$
by $(\star\star)$, and $\tilde\phi$ is uniformly continuous, for any $\eps>0$, for all large $n$,
$$\left|\left(\inf_{y'\in I}\tilde\phi\circ \tilde f^n(x,y')\right) -\tilde\phi\circ \tilde f^n(x,y)\right|<\eps,$$ for all $(x,y)\in \tilde I$.   Therefore,
$$\left|\int\left(\inf_{y'\in I}\tilde\phi\circ \tilde f^n(x,y')\right)~d\tilde\mu(x,y)-\int\tilde\phi\circ \tilde f^n(x,y)~d\tilde\mu(x,y)\right|<\eps.$$
We can also replace $\inf$ with $\sup$ here.  Hence $\tilde\nu= \tilde\mu$ as required.

Given an ergodic measure $\tilde\mu\in \M(\tilde f)$, it is clear that $\tilde p^{-1}(\tilde\mu)=\iota^{-1}(\tilde\mu)$ is ergodic.  Conversely, given an ergodic measure $\mu\in \M(f)$, the ergodicity of $\tilde p(\mu)$ follows as in \cite[Corollary 5.5]{ArPacPuVi}.
\end{proof}

\subsection{Relation between thermodynamics of systems on the interval, square and flow}

For a potential $\phi:I \to \R$, we define $\tilde\phi:\tilde I \to \R$ to be $\tilde\phi(x,y)=\phi(x)$.  Conversely, if $\tilde\phi: \tilde I \to \R$ is a potential depending only on the first coordinate then we define $\tilde\phi_1(x):=\tilde\phi(x,y)$.

\begin{lemma}
Given a potential $\phi:I \to \R$, $\mu$ is an equilibrium state for $(I,f,\phi)$ if and only if $\tilde\mu$ is an equilibrium state for $(\tilde I,\tilde f,\tilde\phi)$.
\label{lem:eq pushed}
\end{lemma}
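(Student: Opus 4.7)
The plan is to lift everything through the bijection $\tilde p:\M(f)\to\M(\tilde f)$ of Lemma~\ref{lem:tilde p} and show that it preserves the free energy, so that $P(f,\phi)=P(\tilde f,\tilde\phi)$ and equilibrium states correspond.

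Fix $\mu\in\M(f)$ and set $\tilde\mu:=\tilde p(\mu)$; by Lemma~\ref{lem:tilde p} we have $\tilde\mu\circ\iota^{-1}=\mu$. Since $\tilde\phi(x,y)=\phi(x)=\phi(\iota(x,y))$ by construction, a change of variables immediately gives
$$\int_{\tilde I}\tilde\phi~d\tilde\mu=\int_I\phi~d(\tilde\mu\circ\iota^{-1})=\int_I\phi~d\mu,$$
so the potential integrals agree under $\tilde p$.

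The main step is then the entropy equality $h(\tilde f,\tilde\mu)=h(f,\mu)$. The inequality $h(\tilde f,\tilde\mu)\ge h(f,\mu)$ is immediate because $\iota$ is a measurable factor map intertwining $\tilde f$ with $f$. For the reverse, I would apply the Abramov--Rokhlin formula to the factor $\iota:(\tilde I,\tilde f,\tilde\mu)\to (I,f,\mu)$ to obtain
$$h(\tilde f,\tilde\mu)=h(f,\mu)+h_{\tilde\mu}\!\left(\tilde f\mid \iota^{-1}\mathcal{B}_{I}\right),$$
where $\mathcal{B}_I$ is the Borel $\sigma$-algebra of $I$ and the conditional term is the fiber entropy along the leaves $\xi_x$ of $\cF^s$. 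By $(\star\star)$, $\tilde f$ is uniformly contracting on each such leaf at rate $C\lambda^n$, so images under $\tilde f^n$ of the atoms of any finite partition of $\tilde I$ become arbitrarily small along fibers; standard Ledrappier--Walters type arguments then force the fiber entropy to vanish.

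Combining the two displays, free energies correspond under $\tilde p$, hence $P(f,\phi)=P(\tilde f,\tilde\phi)$ and $\mu$ attains the supremum on $I$ if and only if $\tilde\mu$ attains it on $\tilde I$, which is the claim. The main obstacle is the vanishing of the fiber entropy: although $(\star\star)$ supplies the uniform contraction and $(\star)$ supplies the $C^3$ regularity of $\cF^s$, verifying the hypotheses of the Abramov--Rokhlin/Ledrappier--Walters machinery in the presence of the discontinuity and flat critical points of $f$ (and hence on the singular set of $\tilde f$) is the delicate part and should be done by approximating by finite partitions whose atoms respect the stable foliation away from $\crit$.
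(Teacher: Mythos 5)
Your proof is correct and takes essentially the same route as the paper: both identify free energies under the bijection of Lemma~\ref{lem:tilde p}, relying on the potential-integral identity $\int\tilde\phi~d\tilde\mu=\int\phi~d\mu$ and the entropy equality $h(\tilde f,\tilde\mu)=h(f,\mu)$, and then pass to the corresponding pressure equality. The paper leaves the entropy equality as ``easy to show'' and cites Bowen for the pressure identity, whereas you supply the Abramov--Rokhlin/vanishing-fiber-entropy argument driven by the uniform contraction $(\star\star)$; your closing caveat about the discontinuity and flat critical points is not a real obstruction, since both the Abramov--Rokhlin formula and the contraction-implies-zero-fiber-entropy step are purely measure-theoretic and do not see those singular features.
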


\begin{proof}
Suppose that $\tilde\mu$ is an equilibrium state for $\tilde\phi$.  Then $$h(\tilde f, \tilde\mu)+\int\tilde\phi~d\tilde\mu=P(\tilde f, \tilde\phi).$$  We let $\mu$ be the projection of $\tilde\mu$ to $I$.  It is easy to show that $\int\tilde\phi~d\tilde\mu= \int \tilde\phi_1~d\mu$ and $h(\tilde f, \tilde\mu)= h(f, \mu)$. Then $$h(f, \mu)+\int\tilde\phi_1~d\mu=P(\tilde f, \tilde\phi).$$
As in \cite{Bowen}, $P(\tilde f, \tilde\phi)= P(f, \tilde\phi_1)$, so $\mu$ is an equilibrium state for $\tilde\phi_1$.

To complete the proof of the lemma, we observe that the above computations also imply that if $\mu\in \M(f)$ is an equilibrium state for $\phi$, then $\tilde\mu= \tilde p(\mu)$ is an equilibrium state for $\tilde\phi$.
\end{proof}

The following lemma gives us candidate equilibrium states for the Lorenz flow.

\begin{lemma}
For all $t\in (t^-, t^+)$, $\mu_t$, the equilibrium state for $\phi_t$ projects to a measure $\tilde\mu_t \in \M(\tilde f, r)$.
\label{lem:roof integr}
\end{lemma}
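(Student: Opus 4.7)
The plan is to show that $\int r \, d\tilde\mu_t < \infty$, which by definition gives $\tilde\mu_t \in \M(\tilde f, r)$. My strategy chains together three estimates: (i) the return time $r$ is dominated by $-\log|x|$; (ii) $-\log|x|$ is dominated by $-\log|Df(x)|$ via the cusp vanishing of $Df$; and (iii) the Lyapunov exponent $\lambda(\mu_t) = \int \log|Df|\, d\mu_t$ of the equilibrium state is finite and non-negative. Pieced together, these will bound $\int r \, d\tilde\mu_t$ by a constant independent of $t$.

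First, by Definition~\ref{def:classflows}, there exist constants $A, B > 0$ with $r(x, y) \le -A\log|x| + B$ for all $(x,y) \in \tilde I \setminus \Gamma$. Since this upper bound depends only on $x$, and since $\tilde\mu_t \circ \iota^{-1} = \mu_t$ by Lemma~\ref{lem:tilde p} (where $\iota(x,y) = x$), one gets $\int r \, d\tilde\mu_t \le -A \int \log|x| \, d\mu_t + B$. Next I would exploit the non-singular cusp condition on $f \in \intclass$: the Hölder estimate $|Df(x) - Df(y)| \le C|x - y|^\alpha$ with $\alpha > 1$, together with $D^\pm f = 0$ at each $c \in \crit$, yields $|Df(x)| \le C|x - c|^\alpha$ near every critical point. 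Taking logarithms locally and absorbing the bounded contribution on sets away from $\crit$ (where $|Df|$ is bounded away from $0$), I obtain constants $A', B'$ so that
\[
-\log|x| \le A'\bigl(-\log|Df(x)|\bigr) + B' \qquad \text{for all } x \in I \setminus \crit.
\]

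Integrating this pointwise inequality against $\mu_t$ produces $\int (-\log|x|)\, d\mu_t \le -A'\lambda(\mu_t) + B'$. By Remark~\ref{rmk:prz}, $\lambda_m(f) \ge 0$ for any $f \in \intclass$, so $\lambda(\mu_t) \ge 0$; meanwhile $|Df|$ is bounded above on $I$, whence $\lambda(\mu_t) < \infty$. Thus the right-hand side is finite (indeed, at most $B'$), and combining with the first step gives $\int r\, d\tilde\mu_t \le A B' + B < \infty$, as required.

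The main obstacle I expect is justifying the pointwise bound $-\log|x| \le A'(-\log|Df|) + B'$ globally on $I$. Near any $c \in \crit$ the cusp Hölder estimate provides the local bound; on compact subsets of $I \setminus \crit$ both sides are bounded, so one just needs a large enough constant. The patching requires that the singular set of the flow (the preimage of $\Gamma$) coincides with $\crit$ of the 1D factor, so that the only blow-up of $r$ sits exactly where $Df$ vanishes. For Rovella maps this is immediate from the explicit form $|Df_{Ro}(x)| \asymp |x|^{\ell-1}$ near $0$, and for general $\hat f \in \flowclass$ it is encoded in Definition~\ref{def:classflows}. A minor secondary point to check is that $\mu_t(\crit) = 0$, which follows from $h(\mu_t) > 0$ in Theorem~\ref{thm:eq states}, so that the integral of $\log|Df|$ is unambiguous.
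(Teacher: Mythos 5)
Your proof is correct and follows essentially the same route as the paper's: bound $r$ by $-\log|x-c|$, bound that via the cusp estimate $|Df(x)|\lesssim |x-c|^{\alpha}$ by $-\log|Df|$, and close by noting $\log|Df|\in L^1(\mu_t)$ (which you justify from $\lambda_m\ge 0$ in Remark~\ref{rmk:prz} plus boundedness of $|Df|$). The paper compresses the middle step to ``since the critical point is non-flat, integrability of $\log|Df|$ implies integrability of $r$''; you have simply unpacked the pointwise inequality and the $L^1$-membership explicitly, which is a welcome elaboration rather than a different argument.
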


\begin{proof}
The fact that $\mu_t$ projects to an invariant measure $\tilde\mu_t\in \M(\tilde f)$ follows as in Lemma~\ref{lem:eq pushed}.  It remains to show that $\int r~d\tilde\mu<\infty$.
 By \eqref{eq:tempo}, $s(x)\asymp \log|x-c|$.  Therefore, since the critical point for $f:I \to I$ is non-flat, the integrability of $\log|Df|$ implies the integrability of $r$.  The fact that $\log|Df|\in L^1(\mu_t)$ is clear from the definitions of $\intclass$ and $\mu_t$.
\end{proof}

Next we extend to the flow.  Any $\tilde f$-invariant measure $\tilde\mu$ on $\tilde I$ can be identified with a $\check f$-invariant measure $\check\mu=\check\iota(\tilde\mu)$ where $\check\iota$ is defined in \eqref{eq:iota}.
Note by \cite[Corollary 5.10]{ArPacPuVi} if $\nu\in \M(\tilde f)$ is ergodic then $\check{\nu}\in \M(\check f)$ is ergodic.
We define the map
$$\hat p:\M(f) \overset{\tilde p}{\longrightarrow} \M(\tilde f) \overset{\check\iota}{\longrightarrow} \M(\check f) \overset{\check p}{\longrightarrow} \M(\hat f).$$
Given $\mu \in \M(f)$, let $\hat\mu:=\hat p(\mu)$.


\begin{lemma}
Suppose that $\hat\phi:\hat I \to \R$ gives a potential $\tilde\phi=\Delta_{\hat\phi}:\tilde I \to \R$
which depends only on the first coordinate. We set $\phi(x)=\tilde\phi(x,y)$ for any $y\in I$.  Then $\mu$ is an equilibrium state for $(I,f,\phi)$ if and only if $\hat\mu$ is an equilibrium state for $(\hat I,\hat f,\hat\phi)$.
\label{lem:eq flow}
\end{lemma}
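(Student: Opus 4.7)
The plan is to bridge the three systems --- the 1D map $(I,f,\phi)$, the 2D return map $(\tilde I,\tilde f,\tilde\phi)$, and the flow $(\hat I,\hat f,\hat\phi)$ --- by concatenating the correspondences already in place. Because $\tilde\phi$ depends only on the first coordinate and $\phi(x)=\tilde\phi(x,y)$, Lemma~\ref{lem:eq pushed} immediately reduces the lemma to the following: $\tilde\mu=\tilde p(\mu)$ is an equilibrium state for $(\tilde I,\tilde f,\tilde\phi)$ if and only if $\hat\mu$ is an equilibrium state for $(\hat I,\hat f,\hat\phi)$.

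To handle this remaining step, I would pass through the suspension flow machinery of Section~\ref{sec:therm for flows}. The bijection $\check\iota:\M(\tilde f,r)\to\M(\check f)$, combined with Lemma~\ref{lem:roof integr} which secures $r$-integrability of the candidate measures, gives $\check\mu=\check\iota(\tilde\mu)$ and then $\hat\mu=\check\mu\circ\check p^{-1}$. The central computation combines Abramov's formula \eqref{eq:ent of flow} with the hypothesis $\Delta_{\hat\phi}=\tilde\phi$ to yield
\[ h(\hat f,\hat\mu)+\int\hat\phi\,d\hat\mu \;=\; \frac{h(\tilde f,\tilde\mu)+\int\tilde\phi\,d\tilde\mu}{\int r\,d\tilde\mu}, \]
while the definition $P(\hat f,\hat\phi)=P(\check f,\check\phi)$ expresses the flow pressure as the supremum of this ratio over $\M(\tilde f,r)$.

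The main obstacle is reconciling two variational problems: being an equilibrium state for $\tilde\phi$ on $\tilde I$ means maximising $h+\int\tilde\phi\,d\tilde\mu$, whereas being an equilibrium state for $\hat\phi$ in the suspension picture means maximising the \emph{ratio} $(h+\int\tilde\phi\,d\tilde\mu)/\int r\,d\tilde\mu$, and these are genuinely different criteria in general. My approach would be to set $c:=P(\hat f,\hat\phi)$ and observe that $\hat\mu$ is an equilibrium state exactly when $h+\int\tilde\phi\,d\tilde\mu - c\int r\,d\tilde\mu = 0$, with the left-hand side nonpositive for all other measures; in other words, $\tilde\mu$ is an equilibrium state for the twisted potential $\tilde\phi - c\,r$ with zero pressure on $\tilde I$. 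Under the identifications built above, transferring this zero-pressure condition back through $\tilde p^{-1}$ and $\iota^{-1}$ matches the equilibrium condition for $\phi$ on $I$. Verifying this final identification --- essentially that the normalisation dictated by the Abramov rescaling aligns the two pressures at the selected measures --- is where I would expect the delicate bookkeeping to lie, with the other pieces (bijectivity of $\tilde p$, invariance of free energy under $\check\iota$, preservation of ergodicity) being routine applications of the earlier lemmas.
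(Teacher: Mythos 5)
Your outline is structurally the same as the paper's: reduce to the base map via Lemma~\ref{lem:eq pushed}, pass to the flow through $\check\iota$, $\check p$ and the Abramov formula, and then reconcile the two variational problems. You also put your finger on the genuine difficulty --- that an equilibrium state for $\tilde\phi$ on $\tilde I$ maximises the free energy $h(\tilde\mu)+\int\tilde\phi\,d\tilde\mu$, while an equilibrium state for the flow maximises the Abramov ratio $\bigl(h(\tilde\mu)+\int\tilde\phi\,d\tilde\mu\bigr)/\int r\,d\tilde\mu$, and these are a priori different optimisation problems --- and your twisted-potential reformulation (setting $c:=P(\hat f,\hat\phi)$ and identifying flow equilibrium states with base equilibrium states for $\tilde\phi-cr$ at zero pressure) is the standard Savchenko/Barreira--Iommi device, a slightly more general framing than the paper's.

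What you leave as ``delicate bookkeeping'' is, however, precisely the step that closes the argument, and it does not dissolve on its own: when $c\ne 0$, equilibrium states for $\tilde\phi-cr$ and for $\tilde\phi$ are genuinely different measures, so nothing transfers back to $\phi$. The paper clears this by normalising $P(\phi)=0$ at the outset. Once $P(\phi)=0$, Lemma~\ref{lem:eq pushed} gives $P(\tilde f,\tilde\phi)=0$, so the numerator $h(\tilde\nu)+\int\tilde\phi\,d\tilde\nu$ is nonpositive for every $\tilde\nu\in\M(\tilde f,r)$ while the denominator $\int r\,d\tilde\nu$ is strictly positive; hence every Abramov ratio is $\le 0$ and equality is attained exactly at $\tilde\mu$, which is the observation ``$P(\hat\phi)=0$'' in the paper's proof. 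In your notation this forces $c=0$, the twist $\tilde\phi-cr$ collapses to $\tilde\phi$ itself, and the iff follows immediately. You should state the normalisation and this vanishing-pressure computation explicitly; as written, the proposal identifies the obstacle without clearing it.
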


\begin{proof}
This follows as in \cite[Theorem 4.4]{Cher}:
We may assume that $P(\phi)=0$.  So $$0=P(\phi)=h(\mu)+\int\phi~d\mu.$$ Then by Lemma~\ref{lem:eq pushed} the measure $\tilde\mu\in \M(\tilde f)$ has
$$0=P(\tilde\phi)=h(\tilde\mu)+\int\tilde\phi~d\tilde\mu.$$
We let $\hat\mu=\hat p(\tilde\mu)$.  Then by Lemma~\ref{lem:roof integr} and the Abramov formula,
$$h(\hat\mu)+\int\hat\phi~d\hat\mu =\frac{h(\tilde\mu)+\int\tilde\phi~d\tilde\mu}{\int r~d\tilde\mu}=0.$$  This computation also shows that $P(\hat\phi)=0$.  Hence $\hat\mu$ is an equilibrium state for $\hat\phi$.
The converse argument follows similarly.
\end{proof}

\begin{proposition}
For $\hat\phi_t$ and $\phi_t$ as in \eqref{eq:hat phi t} and \eqref{eq:phi t} respectively, $\mu\in \M(f)$ is an equilibrium state for $(I,f,\phi_t)$ if and only if $\hat\mu=\hat\iota(\mu)$ is an equilibrium state for $(\hat I,\hat f,\hat\phi_t)$.
\label{prop:nat eq flow}
\end{proposition}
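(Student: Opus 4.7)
The plan is to reduce the statement to Lemma~\ref{lem:eq flow}. That lemma applies whenever the integrated potential on the two-dimensional cross section,
\[
\tilde\phi(x,y) := \Delta_{\hat\phi_t}(x,y,1) = \int_0^{r(x,y,1)} \hat\phi_t\bigl(\hat f_s(x,y,1)\bigr)\,ds,
\]
depends only on the first coordinate; the one-dimensional potential $\phi(x) = \tilde\phi(x,y)$ then has the property that $\mu$ is an equilibrium state for $(I, f, \phi)$ iff $\hat\mu = \hat p(\mu)$ is an equilibrium state for $(\hat I, \hat f, \hat\phi_t)$. So the entire task reduces to identifying $\tilde\phi$ with the function $\phi_t(x) = -t\log|Df(x)|$ on $I$, after which the proposition is a direct application of Lemma~\ref{lem:eq flow}.

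To carry out this identification, I would invoke the multiplicativity of Jacobians along a smooth flow to write
\[
\Delta_{\hat\phi_t}(x,y,1) = -t\int_0^{r(x,y,1)}\log J^{cu}_{\hat f_s(x,y,1)}\,ds = -t\log\left|\det D\hat f_{r(x,y,1)}\big|_{E^{cu}_{(x,y,1)}}\right|.
\]
At a point of $\tilde I$, the centre-unstable plane $E^{cu}$ is spanned by the flow direction $[\hat f]$ and the direction inside $\tilde I$ transverse to the stable foliation $\cF^s$. The derivative $D\hat f_{r(x,y,1)}$ preserves the flow direction, mapping the flow tangent at $(x,y,1)$ to the flow tangent at $\tilde f(x,y,1)$, so the two-dimensional determinant along $E^{cu}$ collapses (after the Poincar\'e reduction) to the one-dimensional Jacobian of $\tilde f$ in the direction transverse to $\cF^s$. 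By $(\star)$, $\tilde f$ preserves $\cF^s$ with quotient dynamics $f$, so this one-dimensional derivative equals $|Df(x)|$. Hence $\Delta_{\hat\phi_t}(x,y,1) = -t\log|Df(x)|$, which is independent of $y$ and coincides with $\phi_t(x)$.

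With this identity in hand, the proposition is immediate. Lemma~\ref{lem:roof integr} shows that $\tilde\mu_t = \tilde p(\mu_t) \in \M(\tilde f,r)$, so the lift $\hat\mu_t = \hat p(\mu_t) \in \M(\hat f)$ is a well-defined probability measure, and Lemma~\ref{lem:eq flow} applied with $\hat\phi = \hat\phi_t$ and $\phi = \phi_t$ yields the claimed equivalence.

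The main obstacle I anticipate is the Jacobian book-keeping in the middle paragraph: the bundle $E^{cu}$ is two-dimensional while the quotient dynamics $f$ is one-dimensional, so one has to argue carefully that the flow-direction contribution to $\det D\hat f_{r}|_{E^{cu}}$ is exactly cancelled by the reparametrisation implicit in passing to the Poincar\'e return map (equivalently, that the cross-section is transverse to the flow and the flow-direction eigenvalue of the return-map derivative is trivial). The partially hyperbolic splitting together with the uniform contraction $(\star\star)$ guarantees the $E^s\oplus E^{cu}$ decomposition is continuous and $D\hat f_s$-invariant along orbits, which is what makes this reduction rigorous; the remaining steps are routine bookkeeping through Lemmas~\ref{lem:tilde p}, \ref{lem:eq pushed}, \ref{lem:roof integr}, and \ref{lem:eq flow}.
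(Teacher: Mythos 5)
You take the same route as the paper: reduce to Lemma~\ref{lem:eq flow}. The paper simply asserts that lemma applies and stops; you attempt to verify its hypothesis, namely that $\Delta_{\hat\phi_t}$ depends only on the first coordinate and coincides with $\phi_t$. That is the right instinct, and you correctly single out the Jacobian bookkeeping as the sticking point, but the claimed identity $\Delta_{\hat\phi_t}(x,y,1)=-t\log|Df(x)|$ is not quite right as written. The telescoping step $\Delta_{\hat\phi_t}(x,y,1)=-t\log\bigl|\det\bigl(D\hat f_{r(x,y,1)}|_{E^{cu}}\bigr)\bigr|$ is fine. The gap is the ``collapse''. When you block-triangularise $D\hat f_{r}|_{E^{cu}}$ against the flow direction, the flow-direction entry is the flow-speed ratio $|X(\tilde f(x,y,1))|/|X(x,y,1)|$, not $1$ (the derivative carries the vector field $X(x,y,1)$ to $X(\tilde f(x,y,1))$, and these need not have equal norm). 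Likewise, $E^{cu}\cap T\tilde I$ is the tangent to $\cF^{cu}$ inside the section, generally not the horizontal $x$-direction, so the transverse entry matches $|Df(x)|$ only after a change-of-basis factor. Both corrections are multiplicative coboundaries, i.e.\ $\Delta_{\hat\phi_t}=\phi_t\circ\iota+h\circ\tilde f-h$ for a bounded continuous $h$ on $\tilde I$.

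The consequence is that $\Delta_{\hat\phi_t}$ is in general \emph{not} a function of $x$ alone, so Lemma~\ref{lem:eq flow} does not apply verbatim. Since coboundaries change neither pressure nor equilibrium states, and $\tilde p$ and $\check\iota$ respect this, the conclusion of the proposition still holds; but one either needs to extend Lemma~\ref{lem:eq flow} to allow a coboundary discrepancy, or check for the geometric model that $\log J^{cu}$ is locally constant on the flow boxes (so that the corrections vanish identically and $h\equiv 0$). The phrase ``exactly cancelled'' should therefore read ``differs by a coboundary, which is harmless for pressure and equilibrium states.'' The paper elides this point entirely; you have at least located where the work is, which is to your credit, but the argument as written overclaims a literal equality.
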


\begin{proof}
Lemma~\ref{lem:eq flow} gives this immediately.
\end{proof}

\begin{proof}[Proof of Theorem~\ref{thm:eq for flow}]
Proposition~\ref{prop:nat eq flow} added to Theorem~\ref{thm:eq states} completes the proof.
\end{proof}

\subsection{Lyapunov spectrum for the flow: proof of Theorem~\ref{thm:lyap spec flow}}
\label{sec:LE flow}

To prove Theorem~\ref{thm:lyap spec flow}, first note that given $x\in I$ with $\lambda(x)=\alpha$, all points $(x,y)\in \xi_x$ must lie in $K(\alpha)$.  Moreover, $$\{\check f_s(x,y, 1): y\in \xi_x\text{ and } s\in [0,r(x,y, z))\} \subset K(\alpha).$$
Therefore, if we view $\hat f$ as a suspension flow over $(\tilde I, \tilde f)$ with roof function $r$,
$$\dim_H(K(\alpha))= \dim_H(K(\alpha)\cap \tilde I)+1= \dim_H(J(\alpha))+2=L(\alpha)+2.$$

Theorem~\ref{thm:lyap spec} then gives $L$ in terms of the pressure.

Therefore, to complete the proof of Theorem~\ref{thm:lyap spec flow} we need to check that the map from the suspension flow model to the flow $\hat f$ does not distort things too much; in particular is locally bilipschitz.  This allows us to assert the first equality above.
In \cite{PaldM} they refer to $(x,y,z)\in \hat I$ as a \emph{regular point} if there exists $(\tilde x, \tilde y, \tilde z)\in \tilde I$ and a neighbourhood $U_0$ such that $\hat f_s:U_0 \to U_s$ is a diffeomorphism where $U_s$ is a neighbourhood of $(\tilde x, \tilde y, \tilde z)$.  Note that any point in $\Lambda\cap \tilde I$ is regular.

For any regular point $(x, y, z)$, there a neighbourhood of $(\tilde x, \tilde y, \tilde z)$ in $\hat I$ such that the flow by $\hat f$ to the corresponding neighbourhood of $(x,y,z)$ is conjugated by $\check p$ to the parallel flow on a cube.  By the Tubular Flow Theorem of \cite[Chapter 2]{PaldM}, this conjugacy is bilipschitz in this neighbourhood. This proves Theorem~\ref{thm:lyap spec flow}.

\emph{Acknowledgements:} MT would like to thank J.M.\ Freitas for useful conversations.

\end{document}